\newtheorem{theorem}{Theorem}[section]
\newtheorem{proposition}[theorem]{Proposition}
\newtheorem{lemma}[theorem]{Lemma}
\theoremstyle{definition}
\newtheorem{definition}[theorem]{Definition}
\newtheorem{notation}[theorem]{Notation}
\theoremstyle{plain}
\numberwithin{equation}{theorem}
\theoremstyle{remark}
\newtheorem{remark}[theorem]{Remark}
\DeclareMathOperator{\length}{length}
\newif\ifhascomments \hascommentstrue
  \newcommand{\dragos}[1]{{\color{red}[[\ensuremath{\bigstar\bigstar\bigstar} #1]]}}
  \newcommand{\matt}[1]{{\color{red}[[\ensuremath{\spadesuit\spadesuit\spadesuit} #1]]}}
  \newcommand{\dragos}[1]{}
  \newcommand{\matt}[1]{}
\begin{document}

\title[A refinement of Christol's theorem]{A refinement of Christol's theorem for algebraic power series}

\author{Seda Albayrak}
\address{Department of Pure Mathematics\\
University of Waterloo\\
200 University Ave. W.
Waterloo, ON N2L 3G1\\
Canada}
\email{gsalbayr@uwaterloo.ca}
\author{Jason P. Bell}
\address{Department of Pure Mathematics\\
University of Waterloo\\
200 University Ave. W.
Waterloo, ON N2L 3G1\\
Canada}
\email{jpbell@uwaterloo.ca}

\begin{abstract} A famous result of Christol gives that a power series $F(t)=\sum_{n\ge 0} f(n)t^n$ with coefficients in a finite field $\mathbb{F}_q$ of characteristic $p$ is algebraic over the field of rational functions in $t$ if and only if there is a finite-state automaton accepting the base-$p$ digits of $n$ as input and giving $f(n)$ as output for every $n\ge 0$. An extension of Christol's theorem, giving a complete description of the algebraic closure of $\mathbb{F}_q(t)$, was later given by Kedlaya.  When one looks at the support of an algebraic power series, that is the set of $n$ for which $f(n)\neq 0$, a well-known dichotomy for sets generated by finite-state automata shows that the support set is either sparse---with the number of $n\le x$ for which $f(n)\neq 0$ bounded by a polynomial in $\log(x)$---or it is reasonably large in the sense that the number of $n\le x$ with $f(n)\neq 0$ grows faster than $x^{\alpha}$ for some positive $\alpha$. The collection of algebraic power series with sparse supports forms a ring and we give a purely algebraic characterization of this ring in terms of Artin-Schreier extensions and we extend this to the context of Kedlaya's work on generalized power series. \end{abstract}

\subjclass[2010]{Primary: 11B85; Secondary: 12J25, 13J05}

\keywords{Christol's theorem, finite-state automata, algebraic power series, automatic sequences, unramified extensions}

\thanks{The second author was supported by a Discovery Grant from the National Sciences and Engineering Research Council of Canada.}

\maketitle

\section{Introduction}
One of the fundamental results in the theory of finite-state automata is Christol's theorem \cite[Theorem 12.2.5]{AS} (see also \cite{C1, C2}), which asserts that a power series $F(t)=\sum a_n t^n$ with coefficients in a finite field $\mathbb{F}_q$ is algebraic over the field of rational functions $\mathbb{F}_q(x)$ if and only if the sequence $\{a_n\}$ is $p$-automatic.  More precisely, there is a finite-state machine accepting the base-$p$ expansion of $n$ as input, reading right-to-left, and giving $a_n\in \mathbb{F}_q$ as output
(see \S\ref{prelim} for details).

When one adopts this automaton-theoretic point of view, it is natural to ask how properties of automatic sequences and algebraic properties of power series correspond. An important dichotomy that arises in the theory of automatic sequences is the sparse/non-sparse partition of such sequences. Given a $p$-automatic sequence $a_n$ taking values in a ring, we can construct its support set $S=\{n\in \mathbb{N}\colon a_n\neq 0\}$. Such sets are called $p$-automatic sets and there is a striking gap in possible growth types of these sets: $\pi_S(x)=|\{n\le x\colon n\in S\}|$ is either poly-logarithmically bounded (i.e., $\pi_S(x)={\rm O}((\log\,x)^d)$ for some $d\ge 1$) or it grows at least like a fractional power of $x$ (i.e., there exists $\alpha>0$ such that $\pi_S(x)\ge x^{\alpha}$ for $x$ sufficiently large).  We call automatic sets $S$ for which the growth function is poly-logarithmically bounded \emph{sparse}, and we call an automatic sequence \emph{sparse} if it has a sparse support set. Such sets arise in many important contexts in which automatic sequences naturally appear.  We give a few such examples now. Derksen's positive characteristic version of the Skolem-Mahler-Lech theorem \cite{Derksen} (see also \cite[Chap. 11]{BGT}) shows that the zero set of a linearly recurrent sequence over a field of characteristic $p>0$ is a finite union of arithmetic progressions augmented by a sparse $p$-automatic set; Kedlaya's \cite{K, K2} work on extending Christol's theorem to give a full characterization of the algebraic closure of $\mathbb{F}_p(t)$ works by generalizing the notion of automatic sequences to maps $f:S_p\to \mathbb{F}_q$, where $S_p$ is the set of nonnegative elements of $\mathbb{Z}[p^{-1}]$, and as part of his work, he shows that for the maps that arise, the post-radix point behaviour of the support of $f$ can be described in terms of sparse automatic sequences.  Moosa and Scanlon's \cite{MS} (see also \cite{Ghioca}) work on the isotrivial case of the Mordell-Lang conjecture deals with $F$-sets and these can again be described using sparse automatic sets (see, also, \cite{BM}). Recent work of the second-named author with Hare and Shallit \cite{BHS} shows that automatic sets that are additive bases of the natural numbers can be characterized in terms of this sparse property as well.

In light of the importance of the sparse/non-sparse dichotomy for automatic sets, it is natural to ask for a characterization of the algebraic power series over a finite field with sparse supports.  The goal of this paper is to give such a characterization.  As it turns out, sparse series are intimately linked with Artin-Schreier extensions, which are the main reason for the fact that the algebraic closure of the field of Laurent series $K((t))$ over a field $K$ is considerably simpler when $K$ is of characteristic zero than when it is of positive characteristic.  Algebraic power series with sparse supports form a ring, and it is natural to ask whether this subalgebra of the ring of algebraic power series can be characterized purely algebraically.  In fact, we are able to give two characterizations.  The first characterization involves being the smallest non-trivial subalgebra that is closed under certain natural sparse-preserving operators.  Given an algebraic power series $F(t)$ with $F(0)=0$, there are four natural ways to produce additional sparse series: the first, is to apply the Artin-Schreier operator $F\mapsto F+F^p +F^{p^2}+\cdots$; the second, is to make a change of variables $t\mapsto \alpha t$ with $\alpha$ a nonzero scalar; the third, is to make a substitution of the form $F(t)\mapsto F(t^c)$ with $c$ a suitable positive rational number; and the fourth, is to multiply or divide by a power of $t$, assuming that one remains in the ring of formal power series.  As it turns out, we show that all sparse series can be built from these simple operations.  Using this, we are able to give a second, purely algebraic characterization, in terms of integral elements in a certain maximal unramified extension (see \S\ref{ur} for background).  More precisely, our main result is the following theorem.

\begin{theorem}\label{thm:main} Let $p$ be a prime and let $A$ be the field extension of $\bar{\mathbb{F}}_p(t)$ consisting of algebraic Laurent series over $\bar{\mathbb{F}}_p$, and let $B$ be the subring of $A$ consisting of algebraic power series with sparse support.  Then we have the following:
\begin{enumerate}
\item[(a)] $B$ is the smallest non-trivial $\bar{\mathbb{F}}_p$-subalgebra of $A$ that possesses the following closure properties:
\begin{enumerate}
\item[(P1)] If $F(t)\in B$ and $F(0)=0$ then $F(t)+F(t^p)+F(t^{p^2})+\cdots \in B$;
\item[(P2)] If $F(t)\in B$ and $\alpha\in \bar{\mathbb{F}}_p$ then $F(\alpha t)\in B$;
\item[(P3)] if $F(t) \in B$ and $t^d F(t^c) \in A$ with $c\in \mathbb{Q}_{>0}$ and $d\in \mathbb{Q}$, then $t^d F(t^c)\in B$;
\end{enumerate}
\item[(b)] If $F(t)$ is a power series with coefficients in $\bar{\mathbb{F}}_p$ then $F(t)\in B$ if and only if there is some $j\ge 0$ such that for $G(t):=F(t^{p^j})$ the following hold:
\begin{enumerate}
\item[(i)] the Galois closure of $G(t)$ over the field $K:=\bar{\mathbb{F}}_p(t^{\pm 1/n}\colon n\ge 1,p\nmid n)$ has degree a power of $p$;
\item[(ii)] the extension $K(G(t))/K$ is unramified outside of $0$ and $\infty$;
\item[(iii)] $G(t)$ is integral over the Laurent polynomial ring $\bar{\mathbb{F}}_p[t^{\pm 1}]$.
\end{enumerate}
\end{enumerate}
\end{theorem}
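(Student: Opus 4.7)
The plan is to prove parts (a) and (b) together, exploiting the correspondence between sparse support and Artin-Schreier towers in characteristic $p$.

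First, I would verify that $B$ itself satisfies (P1)-(P3). Property (P2) is immediate since scaling $t$ leaves the support unchanged, and for (P3) the map $F(t)\mapsto t^dF(t^c)$ sends the support $S$ to $\{cn+d:n\in S\}$, preserving poly-logarithmic growth. For (P1), writing $G=\sum_{k\ge 0}F(t^{p^k})$, the support lies in $\bigcup_{k}p^kS$, so $\pi_G(x)\le\sum_{k}\pi_S(x/p^k)$ remains poly-logarithmic, while the functional equation $G(t^p)=G(t)-F(t)$ yields algebraicity. The technical heart is the minimality assertion in (a): given a non-trivial subalgebra $B'\subseteq A$ closed under (P1)-(P3), I would show $B\subseteq B'$. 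First, $t\in B'$: from any non-constant $F\in B'$, $\bar{\mathbb{F}}_p$-linear combinations $\sum_i\lambda_iF(\alpha_it)$ made accessible by (P2) can isolate a monomial $t^n$, and (P3) then converts $t^n$ to $t$. The main induction proceeds on the Galois-theoretic complexity of a sparse $F$: after replacing $F$ by $F(t^{p^j})$ for a suitable $j$, the extension of $K=\bar{\mathbb{F}}_p(t^{\pm1/n}:p\nmid n)$ generated by $F$ is a finite tower of Artin-Schreier extensions, and the inductive step produces a sparse $H\in B'$ generating a strictly shorter tower, with $F$ expressible as an $\bar{\mathbb{F}}_p$-polynomial in $H$ and an element produced from $H$ by (P1).

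For (b), the three operations translate as follows: (P1) adjoins a root of an Artin-Schreier-type relation, yielding a degree-$p$ extension unramified away from the poles of $F$ (hence from $\{0,\infty\}$ in the $B$-setting) and integral when $F$ is; (P2) acts trivially on the field structure; (P3) is a tamely ramified base change over $K$ that preserves unramified-ness outside $\{0,\infty\}$. Combined with (a), this shows every $F\in B$ satisfies (i)-(iii) after a suitable $p$-power substitution. Conversely, an $F$ satisfying (i)-(iii) lies in a finite Artin-Schreier tower unramified outside $\{0,\infty\}$; induction on tower height, combined with the observation that Artin-Schreier roots of sparse elements remain sparse (the formal solution $y=-c-c^p-c^{p^2}-\cdots$ has a poly-logarithmic counting function whenever that of $c$ does), shows that $F$ can be built from $\bar{\mathbb{F}}_p[t^{\pm1}]$ via (P1)-(P3) and is therefore sparse.

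The main obstacle is the inductive step in the minimality argument in (a): for an arbitrary sparse $F$, identifying a sparse $H\in B'$ of strictly lower Galois complexity from which $F$ is reconstructible using (P1)-(P3) and ring operations. The delicate interplay between the analytic operator (P1), which acts on power series in $t$, and the algebraic Artin-Schreier structure over the Puiseux field $K$, together with the appearance of fractional exponents after (P3) substitutions, is where Kedlaya's framework for generalized power series likely plays a critical role and where the most careful bookkeeping of supports is required.
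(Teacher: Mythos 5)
Your outline captures the right objects (sparse series, Artin--Schreier towers, the Puiseux base field $K$), but it reorganizes the logic in a way that introduces a genuine circularity, and it omits the two technical lemmas on which the paper's argument actually turns.

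For part (a), the paper proves minimality \emph{combinatorially}, with no Galois theory at all: a sparse support is decomposed into finitely many disjoint \emph{simple sparse} sets (Proposition~\ref{sparse}(vi)), each of which is written in the explicit exponential-sum form of Remark~\ref{rem:sparse}, and the characteristic series $\sum_{n\in S}t^n$ of each simple sparse set is shown to lie in $C$ by induction on the nesting depth $s$ (Lemma~\ref{lemsparse}). The crucial technical input is Lemma~\ref{lem: gap}: for every $d\ge 1$, the \emph{$d$-gap operator} $F\mapsto F+F^{p^d}+F^{p^{2d}}+\cdots$ preserves $C$. Axiom (P1) only gives $d=1$; to get general $d$ the paper uses a Moore-determinant argument together with Alon's combinatorial Nullstellensatz to produce coefficients $c_1,\dots,c_d\in\bar{\mathbb F}_p$ with $\sum_i c_ia_i^{p^j}=\mathbf 1_{d\mid j}$, allowing the $d$-gap sum to be built from $d$ applications of (P1) and (P2). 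Your proposal never confronts the need for arbitrary gaps $\delta_i$ in the exponents, so this lemma is missing. Instead you propose a Galois-theoretic induction for (a): but to even assert that a sparse $F$ sits in a $p$-power unramified tower, and that the intermediate layers of that tower are themselves sparse, you need the forward direction of (b), which the paper derives \emph{from} (a). As written, your induction for (a) needs (b), and your proof of (b) needs (a); this circularity must be broken, and you haven't explained how.

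For part (b), the converse direction (Galois data $\Rightarrow$ sparse) is where the sketch is thinnest. The engine of the paper's induction on tower length is Lemma~\ref{lem: VL}: in a $p$-power extension $E/K$ unramified outside $\{0,\infty\}$, any $a\in E$ can be altered by an Artin--Schreier coboundary $b^p-b$ so that the result has no finite pole of order in $\{-p,-2p,\dots\}$; combined with unramifiedness this forces the normalized Artin--Schreier generator into $\mathcal{V}_{E,+}$, i.e., to be integral over $R$, so it belongs to $B$ and the induction can continue. You assert ``Artin--Schreier roots of sparse elements remain sparse,'' which is Proposition~\ref{propsparse}, but the harder point is descending: you must choose the generator $G$ of the top Artin--Schreier layer to be integral and sparse, and that is precisely what Lemma~\ref{lem: VL} provides. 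You also do not address how the coefficients $e_i$ in the expansion $G=\sum_{i<p}e_iH^i$ over the previous layer are seen to lie in $B$; the paper extracts them with the difference operator $\Delta(a)=\sigma(a)-a$ for a generator $\sigma$ of $\Gal(E/E_0)$, and this step has no counterpart in your outline. Finally, (P3) is not a \emph{tamely} ramified base change: $c\in\mathbb Q_{>0}$ is arbitrary and may carry $p$-power factors, so $t\mapsto t^c$ can be wildly ramified; the paper factors it as an automorphism of $K$ composed with $t\mapsto t^{p^{\pm j}}$, which must be handled separately.
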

We point out that in item (ii) in part (b) of Theorem \ref{thm:main}, we are implicitly using the fact that places of $K$ are parametrized by points in $\mathbb{P}_{\bar{\mathbb{F}}_p}^1$ (see \S \ref{mainresb} for details) and so it is this identification of places with points in projective space that is being used when we speak of unramified extensions outside of $0$ and $\infty$.  

Christol's theorem has since been extended to numerous settings including power series over general base fields of positive characteristic \cite{SW}, multivariate power series \cite{S1, S2}. Kedlaya \cite{K, K2} gave an extension of Christol's theorem, which gives a complete characterization of the algebraic closure of $\mathbb{F}_p(t)$ in terms of finite-state automata and so-called generalized Laurent power series (see \S\ref{prelim} for details).  We in fact prove a version of Theorem \ref{thm:main} for generalized power series (see Theorem \ref{thm:main2}), which is in a sense more natural and from which one can deduce much of Theorem \ref{thm:main}, with a small amount of additional work.

The outline of the paper is as follows.  In \S\ref{prelimi} we give some background on the theory of finite-state machines and $p$-automatic sets and sequences as well as the basic number theoretic background on unramified extensions and Artin-Schreier extensions of positive characteristic fields.  In \S\ref{sec:sparse}, we define sparse languages and sets and give decomposition results for such sets that we use in the proof of part (a) of Theorem \ref{thm:main}. In \S\ref{mainres} and \S\ref{mainresb} we prove our main result along with a more general result characterizing generalized Laurent series with sparse support, which arise in Kedlaya's extension of Christol's theorem. 
\section{Preliminaries} \label{prelimi}
In this section, we give some of the basic background necessary for this paper. Since the paper bridges two largely disconnected areas of mathematics---namely, automata theory and algebraic number theory---we give an overview of the required concepts for the reader's convenience.

\subsection{Finite-state Automata}
\label{prelim}
In this subsection, we give some definitions and elementary facts concerning finite-state automata, regular languages, and $p$-automatic sequences and sets. 

Let $\Sigma$ be a nonempty finite set. We call $\Sigma$ an \emph{alphabet} and we call a finite or infinite sequence of symbols chosen from $\Sigma$ a \emph{word} over the alphabet $\Sigma$. We let $\Sigma^*$ denote the set of all finite words over $\Sigma$; that is, $\Sigma^*$ is the free monoid on the set $\Sigma$, with multiplication given by concatenation. In the case when $\Sigma=\{x\}$ is a singleton, we will use $x^*$ to denote $\Sigma^*$.  

Formally, a \emph{deterministic finite automaton with output} (DFAO) is a $6$-tuple $$M = (Q, \Sigma, \delta, q_0, \Delta, \tau),$$ where $Q$ is a finite set of states, $\Sigma$ is a finite input alphabet, $\delta$ is the transition function from $\Sigma\times Q$ to $Q$, $q_0 \in Q$ is the initial state, $\Delta$ is an output alphabet, and $\tau$ is the output function from $Q$ to $\Delta$.  Intuitively, we can think of a DFAO as a directed graph in which the vertices are the elements of $Q$ and for each vertex $q\in Q$ and each $s\in \Sigma$ we have a directed arrow with label $s$ from $q$ to the state $q'=\delta(s,q)$.  Then given a word $w\in \Sigma^*$, the DFAO gives us an output in $\Delta$ as follows: we begin at the initial state $q_0$ and then, reading $w$ from right to left, we obtain a path by moving vertex to vertex as we read each letter in $w$.  When we have finished reading $w$, we arrive at a state $q\in Q$ and we then apply $\tau$ to obtain an output in $\Delta$.  Adopting this point of view, we see we can extend $\delta$ to a map from $\Sigma^*\times Q$ to $Q$ and then the output associated to a word $w\in \Sigma^*$ is simply $\tau(\delta(w,q_0))$.  Thus to a DFAO $M$, there is an associated finite-state function $f_M:\Sigma^*\to \Delta$ given by $f_M(w)=\tau(\delta(w,q_0))$.

To aid with the reader's understanding, we give a concrete example of a DFAO in Figure 1. This DFAO has two states $q_0$ and $q_1$ with initial state $q_0$.  The alphabet $\Sigma$ is $\{0,1\}$ and the arrows in the graph indicate that the transitions are given by $\delta(0,q_0)=q_0$, $\delta(0,q_1)=q_1$, $\delta(1,q_0)=q_1$, and $\delta(1,q_1)=q_0$. The labelling $q_0/0$ and $q_1/1$ inside states is specifying that $\tau(q_0)=0$ and $\tau(q_1)=1$ (i.e. in the state label $q/a$, $a$ is the output associated with the state $q$).  If we regard a word over the alphabet $\{0,1\}$ with no leading zeros as being the binary expansion of a natural number then we can construct a $\{0,1\}$-valued sequence $f(n)$ by feeding the binary expansion of $n$ into our DFAO, starting at state $q_0$ and reading the word from right to left, and then applying $\tau$ to the value of the state we reach after we have fed all of the digits into the machine.  For example, if $n=13$, we have the binary expansion $1101$, and applying successive transitions we see that $1101$ takes state $q_0$ to state $q_1$ and so $f(13)=\tau(q_1)=1$.  The sequence obtained in this particular case via this procedure is known as the Thue-Morse sequence. 

\begin{figure}[!htbp]
\begin{tikzpicture}[shorten >=1pt,node distance=2cm,on grid]
  \node[state,initial]   (q_0)                {$q_0/0$};
  \node[state]           (q_1) [right=of q_0] {$q_1/1$};
  \path[->] (q_0) edge [loop above]   node         {0} ()
		 	edge  [bend left]   	node [above] {1} (q_1)
		(q_1) edge  [loop right]  node {0} ()
			edge [bend left] node [below] {1} (q_0);
  \end{tikzpicture}
  \caption{The DFAO generating the Thue-Morse sequence.}
\end{figure}
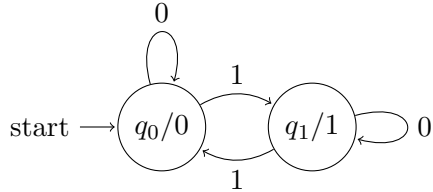
  
A language over $\Sigma$ is a subset of $\Sigma^*$.  We say that a language $\mathcal{L}\subseteq \Sigma^*$ is a \emph{regular language} if there is a DFAO $M=(Q, \Sigma, \delta, q_0, \Delta, \tau)$ with $\Delta=\{0,1\}$ such that $\mathcal{L}$ is precisely the set of words $w\in\Sigma^*$ for which $f_M(w)=1$.  In this case, we are really thinking of the automaton as accepting words $w$ with $f_M(w)=1$ and as rejecting words $w$ with $f_M(w)=0$, and so a regular language is the collection of words that are accepted by some DFAO.

 Let $k\ge 2$ be a natural number and let $\Sigma_k$ be the alphabet $\{0,1,\ldots ,k-1\}$.  Then for every natural number $n$, there is a word $w=(n)_k\in \Sigma_k^*$, which is the base-$k$ expansion of $n$, where we define $(0)_k$ to be the empty word; conversely, given a non-empty word $w\in \Sigma_k^*$ with no leading zeros there is a natural number $n=[w]_k$, which is the natural number whose base-$k$ expansion is $w$. In the case when $w$ is the empty word, we take $[w]_k=0$. A sequence $a: \mathbb{N} \rightarrow \Delta$ is called \emph{$k$-automatic} if there exists a DFAO $M=(Q, \Sigma_k, \delta, q_0, \Delta, \tau)$ such that for each $n \in \mathbb{N}$, $a(n)$ can be computed as  $f_M\left( (n)_k \right)$. We then say that a subset $S \subseteq \mathbb{N}$ is a \emph{$k$-automatic set} if the characteristic function of $S$, $\chi_S:\mathbb{N}\to \{0,1\}$ defines a $k$-automatic sequence. 

Christol's theorem is a fundamental result that gives a characterization of the collection of algebraic power series with coefficients in a finite field in terms of automatic sequences. 
We recall that given a field $\mathbbm{k}$, a power series $F(t)\in \mathbbm{k}[[t]]$ is an \emph{algebraic power series} if there exists some $s\ge 1$ and $B_0(t),B_1(t), \dots, B_s(t)\in \mathbbm{k}[t]$, not all zero, such that $$B_s(t) F(t)^s + \cdots + B_1(t) F(t) + B_0(t)=0.$$  Equivalently, $F(t)$ is a power series that is algebraic over the field of rational functions $\mathbbm{k}(t)$. The collection of algebraic power series forms a ring and it contains the power series expansions of rational functions that are regular at $t=0$.  We now state the famous result of Christol \cite[Theorem 12.2.5]{AS}.

\begin{theorem}[Christol] Let $p$ be a prime number, let $q$ be a power of $p$, and let $F(t) = \sum_{n \geq 0} f(n) t^n \in \mathbb{F}_q [[t]]$. Then $F(t)$ is an algebraic power series if and only if the sequence $f(n)$ is $p$-automatic.
\end{theorem}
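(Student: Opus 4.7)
The plan rests on the equivalent characterization, due to Eilenberg, that the sequence $\{f(n)\}$ is $p$-automatic if and only if its $p$-kernel
\[K_p(f):=\{(f(p^kn+r))_{n\geq 0}:k\geq 0,\,0\leq r<p^k\}\]
is finite. I would first reduce Christol's theorem to this combinatorial criterion by identifying the states of the minimal DFAO computing $f$ with equivalence classes of input suffixes; under reading base-$p$ expansions right-to-left these classes correspond exactly to the distinct subsequences appearing in $K_p(f)$. The task then becomes showing that $F(t)\in\mathbb{F}_q[[t]]$ is algebraic over $\mathbb{F}_q(t)$ precisely when $K_p(F)$ is finite.

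For the implication from $p$-automaticity to algebraicity, I would exploit the radix decomposition
\[F(t)=\sum_{r=0}^{p-1} t^r\, G_r(t^p),\qquad G_r(t)=\sum_{n\geq 0}f(pn+r)t^n\in K_p(F),\]
together with the characteristic-$p$ identity $G(t^p)=\widetilde G(t)^p$, valid on the perfect field $\mathbb{F}_q$, where $\widetilde G$ is obtained from $G$ by applying the inverse Frobenius coefficient-wise. Enumerating $K_p(F)=\{F_1=F,\ldots,F_m\}$, the decomposition expresses each $F_i$ as an $\mathbb{F}_q[t]$-linear combination of $p$-th powers of the elements $\widetilde F_1,\ldots,\widetilde F_m$. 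Iterating this shows that the $\mathbb{F}_q(t)$-subspace $V$ of $\mathbb{F}_q((t))$ spanned by $F_1,\ldots,F_m$ stays within a finite-dimensional $\mathbb{F}_q(t)$-space under the Frobenius $\phi:x\mapsto x^p$; consequently the infinite sequence $F,F^p,F^{p^2},\ldots$ is confined to a finite-dimensional $\mathbb{F}_q(t)$-space and must satisfy a nontrivial linear dependence, producing an algebraic equation for $F$.

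For the converse, I would start from a minimal polynomial $\sum_{i=0}^d B_i(t)F^i=0$ and build a finite-dimensional $\mathbb{F}_q(t)$-subspace $W\subseteq\mathbb{F}_q((t))$ that contains $1,F,\ldots,F^{d-1}$ and is stable under each Cartier-style operator $\Lambda_r:\sum a_nt^n\mapsto\sum a_{pn+r}t^n$. To build $W$, I would use the characteristic-$p$ identity $(\sum a_iF^i)^p=\sum a_i^pF^{ip}$ to write each $\Lambda_r(F^i)$ as a rational combination of the basis elements after clearing denominators using the leading coefficient $B_d(t)$. The main obstacle is then to deduce that $K_p(F)$ is actually \emph{finite}, not merely contained in a finite-dimensional subspace: a generic finite-dimensional $\mathbb{F}_q(t)$-span contains uncountably many distinct power series. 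I would overcome this by exhibiting an $\mathbb{F}_q[t]$-sublattice of $W$ of finite rank that contains $K_p(F)$, exploiting the integrality of the minimal equation for $F$ to bound uniformly the denominators that can appear in any iterated $\Lambda_r$-image. This discreteness step, which uses the actual power-series structure rather than just linear-algebraic finiteness, is the technical heart of the reverse direction.
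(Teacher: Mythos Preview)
The paper does not give its own proof of Christol's theorem; it is stated as background and attributed to \cite{AS} and \cite{C1,C2}. So there is nothing in the paper to compare your argument against beyond noting that your sketch follows the standard $p$-kernel route found in those references.

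Your forward direction is essentially correct. The Frobenius-twist bookkeeping can be avoided entirely by passing to the $q$-kernel (finite because $p$-automatic $\Leftrightarrow$ $q$-automatic), so that $G(t^q)=G(t)^q$ holds on the nose; then the chain of $\mathbb{F}_q(t)$-spans $V\subseteq V^{(q)}\subseteq V^{(q^2)}\subseteq\cdots$, each of dimension at most $|K_q(f)|$, stabilizes and traps all $F^{q^k}$.

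In the reverse direction your last step has a real gap. A finite-rank $\mathbb{F}_q[t]$-sublattice of $W$ is \emph{not} a finite set---already $\mathbb{F}_q[t]$ itself has rank one and is infinite---so containing $K_p(F)$ in such a lattice does not make $K_p(F)$ finite. Bounding denominators is necessary but not sufficient; you must also bound the degrees of the numerators. The standard fix is to produce a finite-dimensional $\mathbb{F}_q$-\emph{vector space} (hence a finite set, since $\mathbb{F}_q$ is finite) stable under every $\Lambda_r$. Concretely, after passing to an Ore-type relation $\sum_{i=0}^{s} c_i(t)F^{q^i}=0$ with $c_0\neq 0$ and $c_i\in\mathbb{F}_q[t]$, one shows that the $\mathbb{F}_q$-span of $\{t^jF^{q^i}:0\le i\le s,\ 0\le j\le N\}$ is $\Lambda_r$-stable for $N$ depending only on $\max_i\deg c_i$. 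The crucial identity is $\Lambda_r(G^qH)=G\,\Lambda_r(H)$ for $G\in\mathbb{F}_q[[t]]$ (with the $q$-Cartier operators), which shows that applying $\Lambda_r$ roughly divides polynomial degrees by $q$; this degree contraction, not denominator control, is what makes the orbit finite.
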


Kedlaya \cite{K} used generalized power series (see Hahn \cite{H}) to give an extension of Christol's theorem, which has the advantage of giving a complete automaton-theoretic description of the algebraic closure of $\mathbb{F}_q(t)$.  Here we give a brief introduction to the concepts involved. Let $\mathbbm{k}$ be a field.  We define the collection of \emph{generalized Laurent series} over $\mathbbm{k}$, to be the set of elements of the form $\sum_{\alpha\in \mathbb{Q}} f(\alpha) t^{\alpha}$, where $f: \mathbb{Q} \rightarrow \mathbbm{k}$ has the property that $\{\alpha\colon f(\alpha)\neq 0\}$ is a well-ordered subset of $\mathbb{Q}$, where we use the usual order $<$ on $\mathbb{Q}$.  Restricting to maps $f$ with well-ordered support allows us to endow the set of generalized Laurent series over $\mathbbm{k}$ with a ring structure, where addition and multiplication are given respectively by
$$\sum_{\alpha\in \mathbb{Q}} f(\alpha) t^{\alpha} + \sum_{\alpha\in \mathbb{Q}} g(\alpha) t^{\alpha}  =\sum_{\alpha\in \mathbb{Q}} (f+g)(\alpha) t^{\alpha}$$ and
$$\left(\sum_{\alpha\in \mathbb{Q}} f(\alpha) t^{\alpha}\right)\left(\sum_{\alpha\in \mathbb{Q}} g(\alpha) t^{\alpha}\right) = \sum_{\alpha\in \mathbb{Q}}\left( \sum_{\beta\gamma =\alpha} f(\beta) g(\gamma) \right) t^{\alpha}.$$ We refer the reader to Kedlaya \cite{K, K2} for further information.

If the support is contained in $\mathbb{Q}_{\ge 0}$, then we call $f$ a \emph{generalized power series}. We let $\mathbbm{k}((t^{\mathbb{Q}}))$ denote the set of generalized Laurent series over the field $\mathbbm{k}$, and we let $\mathbbm{k}[[t^{\mathbb{Q}}]]$ denote the set of generalized power series over $\mathbbm{k}$. The generalized power series over $\mathbbm{k}$ form a local ring with unique maximal ideal consisting of generalized power series $\sum_{\alpha\ge 0} f(\alpha) t^{\alpha}$ with $f(0)=0$. We let $\mathbbm{k}[[t^{\mathbb{Q}}]]_{>0}$ denote this maximal ideal. We also find it convenient to let 
$\mathbbm{k}((t^{\mathbb{Q}}))_{<0}$ denote the collection of generalized Laurent series over $k$ whose support lies in $(-\infty,0)$.  We now give further details concerning Kedlaya's automaton-theoretic characterization of the algebraic closure of $\mathbb{F}_p(t)$.

Let $k\ge 2$ be a natural number. We say that a string $u=u_1 \dots u_n \in \Sigma^* := \{ 0,1, \dots, k-1, \, _{\bullet} \}^*$, with $_{\bullet}$ representing the radix point, is a \emph{valid} base-$k$ expansion, if $n\ge 1$, $u_1 \neq 0$, $u_n \neq 0$ and exactly one of $u_1, \dots, u_n$ is equal to the radix point. If $u=u_1 \dots u_n$ is a valid base-$k$ expansion and $u_j$ is its radix point, then we can associate a nonnegative $k$-adic rational, $[u]_k$, to $u$ via the rule \begin{equation}\label{eq:valid}
[u]_k = \sum_{i=1}^{j-1} u_i k^{j-1-i} + \sum_{i=j+1}^{n} u_i k^{j-i}.
\end{equation}

We let 
\begin{equation}
S_k := \{ m/k^n \colon m,n \in \mathbb{Z}_{\ge 0}\}.
\end{equation}
 Given a valid base-$k$ expansion $u$ we obtain a value $[u]_k \in S_k$, where we take $[\, _{\bullet} \,]_k=0$, and we say that $u$ is the base-$k$ expansion of $[u]_k$. Conversely, given an element of $S_k$ it has a unique valid base-$k$ expansion and for $v \in S_k$, we write $(v)_k$ for the valid base-$k$ expansion of $v$. Observe that the maps $[\,\cdot \,]_k$ and $(\,\cdot \,)_k$ naturally extend the maps introduced earlier.

A function $f: S_k \rightarrow \Delta$ is $k$-automatic (in Kedlaya's sense) if there is a DFAO $M$ with input alphabet $\Sigma = \{ 0, 1, 2, \dots, k-1, \, _{\bullet} \}$ and output alphabet $\Delta$ such that for each $v \in S_k$, $f(v) = f_M\left( (v)_k \right)$. In analogy with the classical case, a subset of $S_k$ is called a \emph{$k$-automatic set} if its characteristic function is $k$-automatic.

Kedlaya's extension of Christol's theorem uses the notion of quasi-automatic series, which we now define.
\begin{definition} \label{defpquasi}
Let $p$ be a prime and let $q$ be a power of $p$. A generalized Laurent series $\sum_{\alpha\in \mathbb{Q}} f(\alpha)t^{\alpha}\in \mathbb{F}_q((t^{\mathbb{Q}}))$ is \emph{$p$-quasi-automatic} if the following hold: \begin{enumerate}
\item for some integers $a$ and $b$ with $a>0$, the set $aS+b:=\{ai+b \colon i \in S\}$ is contained in $S_p$; and 
\item for some $a,b$ for which (i) holds, the function $f_{a,b}: S_p \rightarrow \mathbb{F}_q$ given by $f_{a,b}(x)=f((x-b)/a)$ is $p$-automatic (in the sense given above).
\end{enumerate}
\end{definition}

Kedlaya's \cite[Theorem 4.1.3]{K} main result is the following extension of Christol's theorem (see also \cite[Theorem 10.4]{K2}).

\begin{theorem}
(Kedlaya) Let $p$ be prime and let $F(t)=\sum_{\alpha\in \mathbb{Q}} f(\alpha)t^{\alpha} \in \bar{\mathbb{F}}_p((t^{\mathbb{Q}}))$ be a generalized Laurent series.  Then if $F(t)$ is algebraic over $\bar{\mathbb{F}}_p(t)$ then there is a power $q$ of $p$ such that $F(t)=\sum_{\alpha\in \mathbb{Q}} f(\alpha)t^{\alpha} \in \mathbb{F}_q((t^{\mathbb{Q}}))$ and $F(t)$ is $p$-quasi-automatic.  Conversely, if $F(t)=\sum_{\alpha\in \mathbb{Q}} f(\alpha)t^{\alpha} \in \mathbb{F}_q((t^{\mathbb{Q}}))$ is $p$-quasi-automatic then $F(t)$ is algebraic over $\bar{\mathbb{F}}_p(t)$.
\label{thmK}
\end{theorem}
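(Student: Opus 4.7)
The plan is to prove the two implications separately, following the standard $p$-kernel / Cartier-operator strategy adapted to generalized Laurent series.

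For the reverse direction, suppose $F(t)=\sum_{\alpha\in\mathbb{Q}} f(\alpha)t^{\alpha}\in \mathbb{F}_q((t^{\mathbb{Q}}))$ is $p$-quasi-automatic with witnesses $a,b$. I would work with the normalized series
$$G(t)\;:=\;t^{b}\,F(t^{a})\;=\;\sum_{v\in S_p} f_{a,b}(v)\,t^{v},$$
whose support lies in $S_p$ and whose coefficient function $f_{a,b}$ is $p$-automatic in Kedlaya's sense. Since $F(t)=t^{-b/a}G(t^{1/a})$ and $\bar{\mathbb{F}}_p(t^{1/a})/\bar{\mathbb{F}}_p(t)$ is algebraic, it suffices to show $G$ is algebraic. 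For this I would decompose $G$ via radix-aware Cartier operators: for each digit $r\in\{0,\dots,p-1\}$ and each signed shift $s$, write $G=\sum_{r,s}t^{r/p^{s}}(\Lambda_{r,s}G)(t^{p})$ style identities that mirror reading or appending a digit to a base-$p$ expansion in $S_p$. The $p$-automaticity of $f_{a,b}$ says the set of sections obtained by iterating these operators is finite, so together with their Frobenius images they span a finite-dimensional $\bar{\mathbb{F}}_p(t^{1/p^{\infty}})$-subspace of $\bar{\mathbb{F}}_p((t^{\mathbb{Q}}))$. A standard linear-dependence argument then produces a non-trivial relation $\sum_{i=0}^{N}c_{i}(t)\,G^{p^{i}}=0$ with $c_{i}\in\bar{\mathbb{F}}_p(t^{1/M})$, which exhibits $G$ as algebraic over $\bar{\mathbb{F}}_p(t)$.

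For the forward direction, suppose $F\in\bar{\mathbb{F}}_p((t^{\mathbb{Q}}))$ is algebraic with minimal polynomial $P(t,y)=\sum_{i=0}^{n}P_i(t)y^i$. I would first argue that the coefficients of $F$ lie in a single finite field $\mathbb{F}_q$: all the $P_i$ have coefficients in some $\mathbb{F}_q$, and $F$ is determined over $\mathbb{F}_q((t^{\mathbb{Q}}))$ by its initial segment via the Newton-Puiseux-style recursion forced by $P(t,F)=0$, so after possibly enlarging $q$ finitely many times the whole series lives in $\mathbb{F}_q((t^{\mathbb{Q}}))$. Next, I would control the exponents: arguing on the Newton polygon of $P$ and using that the support is well-ordered, I would produce integers $a>0$ and $b$ so that $aS+b\subseteq S_p$, where $S=\Supp F$. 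Equivalently, after the change of variable above, $G=t^{b}F(t^{a})$ is a generalized power series supported on $S_p$ that is still algebraic over $\bar{\mathbb{F}}_p(t)$.

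The core of the argument is then to show that the coefficient function $g:S_p\to \mathbb{F}_q$ of $G$ is $p$-automatic. For this I would assemble the finite-dimensional $\mathbb{F}_q(t^{1/N})$-vector space $V$ spanned by $G,G^p,G^{p^2},\dots,G^{p^{n-1}}$ (using the algebraic relation to close off after finitely many Frobenius powers) and show that $V$ is stable under all Cartier-like operators $\Lambda_{r,s}$ that correspond to prepending a digit or the radix point to a valid base-$p$ expansion. This stability produces a finite collection of generalized power series whose coefficient functions are exactly the sections of $g$ obtained by feeding initial segments of $(v)_p$ into a DFAO; assembling the states and transitions from this finite list yields the required automaton reading the alphabet $\{0,1,\ldots,p-1,\,_{\bullet}\}$. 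The main obstacle, and the place where the proof departs substantially from Christol's original argument, is this bookkeeping for the radix point: one must verify that the operators $\Lambda_{r,s}$ correctly model both the integer-part and fractional-part portions of $(v)_p$, that transitions only fire on valid expansions (respecting the ``no leading or trailing zeros'' convention of \eqref{eq:valid}), and that the resulting DFAO's output on $(v)_p$ really equals $g(v)$ rather than a shifted variant. Once these compatibilities are checked, $g$ is $p$-automatic by construction, hence $F$ is $p$-quasi-automatic via the witnesses $(a,b)$.
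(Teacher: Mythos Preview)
The paper does not prove this statement at all: Theorem~\ref{thmK} is quoted verbatim from Kedlaya \cite[Theorem 4.1.3]{K} (see also \cite[Theorem 10.4]{K2}) and used as a black box throughout, so there is no ``paper's own proof'' to compare your proposal against. Your sketch is broadly in the spirit of Kedlaya's argument (normalize by an affine change of exponent to land in $S_p$, then run a Cartier/section-operator argument that tracks the radix point), but since the present paper simply cites the result, any assessment of correctness would have to be made against \cite{K,K2} rather than against this paper.
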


\subsection{Algebraic preliminaries}
In this section we provide some of the necessary algebraic background used in our main theorem.


We first introduce places and valuations. Let $\mathbbm{k}$ be a field and let $K$ be a field extension of $\mathbbm{k}$. A \emph{valuation} of $K$ is a map $\nu: K \to \Gamma\cup \{\infty\}$, where $\Gamma$ is a totally ordered abelian group, such that the following conditions are met:
\begin{enumerate}
\item $\nu(a)=\infty$ if and only if $a=0$;
\item $\nu(ab)=\nu(a)+\nu(b)$;
\item $\nu(a+b)\ge \min(\nu(a),\nu(b))$, with equality whenever $\nu(a)\neq \nu(b)$. 
\end{enumerate} 
We say that a valuation is \emph{trivial} if it is zero on all nonzero elements of the field.  We define the \emph{rank} of a valuation to be the rank of the abelian group $\Gamma$; i.e., the dimension of $\Gamma\otimes_{\mathbb{Z}} \mathbb{Q}$ as a $\mathbb{Q}$-vector space, and we say that a valuation is \emph{discrete} if its value group $\Gamma$ is $\mathbb{Z}$. 

Given a valuation, we have a \emph{valuation ring} $\mathcal{O}_{\nu}\subseteq K$ consisting of elements with nonnegative valuation. Then $\mathcal{O}_{\nu}$ is a local ring with a unique maximal ideal, which we denote $\mathcal{M}_{\nu}$, given by the collection of elements with strictly positive valuation. The \emph{residue field} of the valuation is defined to be $\mathcal{O}_{\nu}/\mathcal{M}_{\nu}$.  We say that two valuations of $K$ are \emph{equivalent} if they have the same valuation ring and we call an equivalence class of valuations of $K$ a \emph{place} of $K$.  We will often use an equivalence class representative to represent a place. We will generally deal with discrete valuations $\nu$, in which case the valuation ring $\mathcal{O}_{\nu}$ is a principal ideal domain and a generator for the maximal ideal $\mathcal{M}_{\nu}$ is called a \emph{uniformizing parameter}.


In the case where $\mathbbm{k}$ is an algebraically closed field and $K$ is a finitely generated extension of $\mathbbm{k}$ of transcendence degree one, there is a smooth projective curve $X$ over $\mathbbm{k}$ such that $K$ is the function field of $X$.  Then if we look at non-trivial places of $K$ with trivial restriction to $\mathbbm{k}$, these are parametrized by the closed points of $X$ as follows. Given $x\in X$, we can define a valuation $\nu_x:K\to \mathbb{Z}\cup \{\infty\}$ by taking $\nu_x(f)$ to be the order of vanishing of the function $f$ at the point $x$ (see \cite[Ch. VI, \S17]{ZS}).

In the case we are interested in, $\mathbbm{k}$ will be the algebraic closure of a finite field and in this case, every valuation of $K$ has trivial restriction to $\mathbbm{k}$.
From the above, we then have that the non-trivial places of the field $\bar{\mathbb{F}}_p(t)$ are parametrized by the projective line over $\bar{\mathbb{F}}_p$.

We now use places to define ramification. 
\begin{definition} \label{ur}
Given a finite extension of fields $L\supseteq K$, we say $L$ is \emph{unramified} at a place $\nu$ of $K$ if the value group of every extension of $\nu$ to $L$ is the same as the value group of $\nu$.\end{definition}
In our case, we generally deal with discrete valuations, and a place $\nu$ of $K$ has finitely many extensions $\nu_1,\ldots ,\nu_s$ to $L$. Then for each $i\in \{1,\ldots ,s\}$ we have a discrete valuation ring $\mathcal{O}_{\nu_i}\subseteq L$ consisting of elements in $L$ with nonnegative valuation with $\nu_i$, and similarly we have a discrete valuation ring $\mathcal{O}_{\nu}\subseteq K$.  Then these are local rings whose maximal ideals are principal and we have $\mathcal{O}_{\nu}\subseteq \mathcal{O}_{\nu_i}$.  Then if $\pi$ is a generator for the maximal ideal of $\mathcal{O}_{\nu}$ and if $L$ is unramified at $\nu$ then $\pi$ will also generate the maximal ideal of $\mathcal{O}_{\nu_i}$ for $i=1,\ldots ,s$.  Furthermore, we will often work with places that are parametrized by points in $\mathbb{P}^1$ and so we will often identify places with the corresponding points in projective space.
  
In our setting, the fields $K$ and $K'$ we work with will have the property that the compositum of two finite extensions of $K$ inside $K'$ that are unramified at some place $\nu$ of $K$ is again unramified at $\nu$ (see Serre \cite[Chapter III]{Serre} for further details).

The final algebraic ingredients we will use in proving Theorem \ref{thm:main} are the notion of Artin-Schreier extensions, which are degree-$p$ Galois extensions of fields of positive characteristic $p$, and integrality.  We first quickly recall the relevant definitions for integral elements.

Given integral domains $S\subseteq T$, we say that $u\in T$ is \emph{integral} over $S$ if there is a monic polynomial $f(x)\in S[x]$ with $f(u)=0$. The set of elements of $T$ that are integral over $S$ forms a ring and is called the \emph{integral closure} of $S$ in $T$.  We say that $S$ is \emph{integrally closed} if $S$ is integrally closed in its field of fractions. Finally, we recall the notion of Artin-Schreier extensions. 

\begin{theorem}[Artin-Schreier Theorem] \label{A-S}
Let $p$ be prime, let $\mathbbm{k}$ be a field of characteristic $p$, and let $K$ be a Galois extension of $\mathbbm{k}$ of degree $p$.
\begin{enumerate}
\item[(1)] There exists $\alpha \in K$ such that $K = \mathbbm{k}(\alpha)$ and $\alpha$ is the root of a polynomial $X^p - X - a$ for some $a \in \mathbbm{k}$.
\item[(2)] Conversely, given $a \in \mathbbm{k}$, the polynomial $f(X) = X^p - X -a$ either has one root in $\mathbbm{k}$, in which case all its roots are in $\mathbbm{k}$, or it is irreducible. In this latter case, if $\alpha$ is a root then $\mathbbm{k}(\alpha)$ is cyclic Galois extension of $\mathbbm{k}$ of degree $p$.
\end{enumerate}
\end{theorem}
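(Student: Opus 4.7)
The plan is to prove part (2) first, since it is the more concrete statement, and then bootstrap it to prove part (1) via the additive version of Hilbert 90.

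For part (2), I would start from the Frobenius identity in characteristic $p$: if $\alpha$ is a root of $f(X) = X^p - X - a$ in some extension of $\mathbbm{k}$, then for every $i \in \mathbb{F}_p \subseteq \mathbbm{k}$ we have
\[
(\alpha + i)^p - (\alpha + i) = \alpha^p + i^p - \alpha - i = \alpha^p - \alpha = a,
\]
using $i^p = i$. So the $p$ roots of $f$ in a splitting field are exactly $\alpha, \alpha+1, \ldots, \alpha+(p-1)$, and in particular if one root lies in $\mathbbm{k}$ then all do. Otherwise, suppose for a contradiction that $f = gh$ in $\mathbbm{k}[X]$ with $1 \le d := \deg g < p$. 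The roots of $g$ are then a $d$-element subset $\{\alpha + i_1, \ldots, \alpha + i_d\}$ of the full root set. The coefficient of $X^{d-1}$ in $g$ equals $-(d\alpha + i_1 + \cdots + i_d)$, which must lie in $\mathbbm{k}$. Since $0 < d < p$, $d$ is a unit in $\mathbb{F}_p$, so we can solve for $\alpha \in \mathbbm{k}$, contradicting our assumption. Hence $f$ is irreducible and $\mathbbm{k}(\alpha)/\mathbbm{k}$ has degree $p$. The map $\sigma\colon \alpha \mapsto \alpha + 1$ extends to an automorphism of $\mathbbm{k}(\alpha)$ of order $p$, giving a cyclic Galois group isomorphic to $\mathbb{Z}/p\mathbb{Z}$.

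For part (1), since $K/\mathbbm{k}$ is Galois of prime degree $p$, its Galois group is cyclic; fix a generator $\sigma$. The goal is to find $\beta \in K$ such that $\sigma(\beta) = \beta + 1$, for then the conjugates of $\beta$ are exactly $\beta, \beta+1, \ldots, \beta + (p-1)$, which are $p$ distinct elements, forcing $K = \mathbbm{k}(\beta)$; moreover $\sigma(\beta^p - \beta) = (\beta+1)^p - (\beta+1) = \beta^p - \beta$, so $a := \beta^p - \beta$ lies in the fixed field $\mathbbm{k}$, and $\beta$ is a root of $X^p - X - a$ as required. Producing $\beta$ is equivalent to showing that $1 \in \mathbbm{k}$ lies in the image of $\sigma - \mathrm{id}$ acting on $K$. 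This is precisely the additive form of Hilbert 90 (equivalently $H^1(\mathrm{Gal}(K/\mathbbm{k}), K) = 0$ where $K$ is viewed as an additive Galois module): any element $c \in \mathbbm{k}$ with trace zero to $\mathbbm{k}$ is of the form $\sigma(\beta) - \beta$. Since $\mathrm{Tr}_{K/\mathbbm{k}}(1) = p \cdot 1 = 0$ in characteristic $p$, the element $1$ satisfies this trace condition, and so the desired $\beta$ exists.

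The one genuine ingredient beyond routine manipulation is the additive Hilbert 90, which I would either cite directly or derive in one line from Artin's linear independence of characters (the standard argument: pick $y \in K$ with nonzero trace, guaranteed by separability, and exhibit $\beta$ explicitly as a combination of the conjugates of $y$ weighted by partial sums of $c$). This is the only step that is not a direct calculation, so it is where I would be most careful; everything else is either Frobenius arithmetic or a degree-counting argument.
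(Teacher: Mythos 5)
The paper does not supply its own proof of this theorem: the entire ``proof'' is the citation ``See Lang \cite[VIII.6]{L}.'' Your argument is correct and is, in fact, the standard one given in that reference. For part (2) you observe that the Frobenius relation $i^p=i$ for $i\in\mathbb{F}_p\subseteq\mathbbm{k}$ forces the root set of $X^p-X-a$ to be the single coset $\{\alpha+i : i\in\mathbb{F}_p\}$, so either all roots lie in $\mathbbm{k}$ or none do, and the coefficient-of-$X^{d-1}$ computation in a hypothetical proper factor $g$ of degree $d$ with $0<d<p$ forces $\alpha\in\mathbbm{k}$, ruling out nontrivial factorizations in the second case; the map $\alpha\mapsto\alpha+1$ then exhibits the cyclic Galois group. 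For part (1) you correctly reduce to producing $\beta\in K$ with $\sigma(\beta)-\beta=1$ and invoke the additive form of Hilbert's Theorem 90, which applies because $\mathrm{Tr}_{K/\mathbbm{k}}(1)=p\cdot 1=0$ in characteristic $p$; the verification that $a:=\beta^p-\beta$ is $\sigma$-invariant and that the $p$ distinct conjugates $\beta,\beta+1,\ldots,\beta+(p-1)$ force $K=\mathbbm{k}(\beta)$ is routine and complete. Since the one nontrivial external input---additive Hilbert 90, or equivalently Artin's linear independence of characters---is exactly the tool Lang uses, you have reconstructed the cited argument rather than found a different route; there is nothing in the paper's treatment to compare against beyond the citation itself.
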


\begin{proof}
See Lang \cite[VIII.6]{L}.
\end{proof}

The Artin-Schreier theorem provides an inductive means of describing Galois extensions of size a power of $p$.
\begin{remark}
\label{rem:Galois}
Let $\mathbbm{k}$ be a field of characteristic $p>0$ and let $K$ be a Galois extension of $\mathbbm{k}$ of degree $p^m$ for some $m\ge 0$.  Then there exists a chain of fields
$\mathbbm{k}=K_0\subseteq K_1\subseteq \cdots \subseteq K_m=K$ with each $K_i$ a Galois extension of $\mathbbm{k}$ and such that $K_{i+1}$ is an Artin-Schreier extension of $K_i$ for $i=0,\ldots ,m-1$.
\end{remark}
\begin{proof} This follows immediately from the fundamental theory of Galois theory combined with the fact that a group $P$ of order $p^m$ is nilpotent and hence has a chain of subgroups 
$$P=P_0\unrhd P_1\unrhd \cdots \unrhd P_m=\{1\}$$ with each $P_i$ normal in $P$ and $|P_i|=p^{m-i}$ for $i=1,\ldots ,m$.  
\end{proof}
\section{Sparseness}
\label{sec:sparse}
In this section, we give an overview of sparse languages and sparse sets.  Much of the material here is well-known and in some cases we borrow from \cite{BM}.
We let $\Sigma$ be a finite alphabet and we let $\mathcal{L}\subseteq \Sigma^*$ be a language.  We define the counting function of the language
$$f_{\mathcal{L}}(n):=\#\{w\in \mathcal{L}\colon {\rm length}(w)\le n\}.$$
We say that a regular language $\mathcal L$ is {\em sparse} if one of the equivalent conditions in Proposition~\ref{sparse} below hold. Sparse languages play an integral role in the theory of regular languages and finite-state automata and have been studied in numerous contexts. We borrow a summary of conditions equivalent to sparseness from \cite{BM}, which combines results from
\cite{Ginsburg&Spanier:1966,Trofimov:1981,Ibarra&Ravikumar:1986,Szilard&Yu&Zhang&Shallit:1992,Gawrychowski&Krieger&Rampersad&Shallit:2010}.

\begin{proposition}
\label{sparse}
Let $\mathcal{L}$ be a regular language. Then the following conditions are equivalent.
\begin{enumerate}
\item There is a natural number $d$ such that $f_\mathcal{L}(n)={\rm O}(n^d)$.   
\item There is some $C>1$ such that $f_{\mathcal{L}}(n)={\rm o}(C^n)$.
\item There do not exist words $u,v,a,b$ with $a,b$ non-trivial and of the same length and $a\neq b$ such that $u\{a,b\}^* v\subseteq \mathcal{L}$.
\item Suppose $\Gamma = (Q,\Sigma,\delta,q_0,F)$ is an automaton accepting $\mathcal{L}$ in which all states are accessible.
Then $\Gamma$ satisfies the following.
\begin{itemize}
\item[($*$)] If $q$ is a state such that $\delta(q,v)\in F$ for some word $v$ then there is at most one non-trivial word $w$ with the property that $\delta(q,w)=q$ and $\delta(q,w')\neq q$ for every non-trivial proper prefix $w'$ of $w$.
\end{itemize}
\item There exists  an automaton accepting $\mathcal{L}$ that satisfies~{\em($*$)}.
\item The language $\mathcal{L}$ is a finite union of disjoint languages of the form $v_1 w_1^* v_2 w_2^* \cdots v_s w_k^* v_{s+1}$ where $s\ge 0$ and the $v_i$ are possibly trivial words and the $w_i$ are non-trivial words.
\end{enumerate}
\end{proposition}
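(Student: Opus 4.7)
The plan is to establish the equivalences via the cyclic chain of implications $(6) \Rightarrow (1) \Rightarrow (2) \Rightarrow (3) \Rightarrow (4) \Rightarrow (5) \Rightarrow (6)$, noting that $(4) \Rightarrow (5)$ is immediate since $\mathcal{L}$ admits at least one accessible DFA.

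The elementary portion is $(6) \Rightarrow (1) \Rightarrow (2) \Rightarrow (3)$. Given the decomposition in (6), each chain $v_1 w_1^* v_2 \cdots v_s w_s^* v_{s+1}$ contributes one word for each tuple $(e_1,\ldots,e_s) \in \mathbb{Z}_{\ge 0}^s$ with $\sum e_i |w_i| + \sum |v_i| \le n$, and a standard lattice-point count bounds this by $\mathrm{O}(n^s)$; summing over the finitely many chains gives (1). The implication $(1) \Rightarrow (2)$ is trivial. For $(2) \Rightarrow (3)$, if $u\{a,b\}^* v \subseteq \mathcal{L}$ with $|a|=|b|=k$ and $a \neq b$, then $f_\mathcal{L}(n) \ge 2^{\lfloor (n - |u| - |v|)/k \rfloor}$ for all large $n$, which is incompatible with $f_\mathcal{L}(n) = \mathrm{o}(C^n)$ whenever $1 < C < 2^{1/k}$.

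The substantive step is $(3) \Rightarrow (4)$, which I would argue contrapositively. Suppose an accessible DFA $\Gamma$ for $\mathcal{L}$ violates $(*)$: there exist a state $q$ that reaches an accepting state via some word $v$, together with two distinct non-trivial words $w_1, w_2$ that are simple loops at $q$ (that is, each returns to $q$ only at the end). Set $a = w_1^{|w_2|}$ and $b = w_2^{|w_1|}$; both loop at $q$ and share length $|w_1||w_2|$. If $a = b$, the common word would carry periods $|w_1|$ and $|w_2|$, so by the Fine--Wilf theorem it would have period $d := \gcd(|w_1|,|w_2|)$, forcing both $w_1$ and $w_2$ to be powers of the length-$d$ prefix. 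This is incompatible with the simplicity of the loops (a proper power would return to $q$ at a non-trivial proper prefix) unless $w_1 = w_2$, which contradicts distinctness. Thus $a \neq b$, and taking $u$ to be any word that drives $q_0$ to $q$ (possible by accessibility) yields $u\{a,b\}^* v \subseteq \mathcal{L}$, contradicting (3).

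Finally, for $(5) \Rightarrow (6)$, I would start from an automaton $\Gamma$ satisfying $(*)$, delete all states from which no accepting state can be reached, and observe that in what remains every state carries at most one simple loop. Each accepting path from $q_0$ then factors uniquely as a concatenation of connecting sub-paths that visit no intermediate state twice, interspersed at certain visited states with iterations of the unique simple loop available there. Grouping accepting paths by their underlying cycle-free backbone---of which there are only finitely many---produces a finite disjoint representation of $\mathcal{L}$ of the form $\bigcup v_1 w_1^* v_2 \cdots v_s w_s^* v_{s+1}$, with disjointness arranged by requiring that the $v_i$'s themselves do not traverse any loop at an interior state. The main obstacle is the Fine--Wilf step in $(3) \Rightarrow (4)$ that rules out $a = b$; the bookkeeping for disjointness in $(5) \Rightarrow (6)$ is the only other delicate point, and can be handled by the standard ``unrolling'' of loop excursions along each backbone.
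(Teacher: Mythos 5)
Your proof takes a genuinely different route from the paper's, because the paper does not prove Proposition~\ref{sparse} at all---it simply cites [BM, Proposition~7.1], which in turn assembles results from the cited literature on bounded and sparse regular languages, and then remarks that the disjointness in item (vi) is straightforward. By contrast, you give a self-contained cyclic argument, and it is essentially correct. The pieces $(6)\Rightarrow(1)\Rightarrow(2)\Rightarrow(3)$ and $(4)\Rightarrow(5)$ are routine and you handle them properly. Your treatment of $(3)\Rightarrow(4)$ is the genuinely interesting part: setting $a=w_1^{|w_2|}$, $b=w_2^{|w_1|}$ and invoking Fine--Wilf to rule out $a=b$ works because $|w_1||w_2|\ge |w_1|+|w_2|-\gcd(|w_1|,|w_2|)$, and the period $d=\gcd(|w_1|,|w_2|)$ together with the observation that $\delta(q,z^{\gcd(e_1,e_2)})=q$ (via periodicity of the orbit of $q$ under $z$) forces $w_1=w_2$, contradicting distinctness. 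A marginally lighter alternative that avoids Fine--Wilf is to take $a=w_1w_2$ and $b=w_2w_1$: if these coincide then $w_1$ and $w_2$ commute, hence by the classical commutation theorem for free monoids both are powers of a common word $z$, and the same orbit-periodicity observation finishes the contradiction.

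The one step you should tighten is $(5)\Rightarrow(6)$. The phrase ``cycle-free backbone'' and the requirement that the $v_i$ ``not traverse any loop at an interior state'' gesture at the right idea but do not pin down a unique decomposition. What is actually true and worth stating explicitly is this structural consequence of $(*)$ (after trimming to accessible and coaccessible states): every state lies on at most one simple cycle, so the strongly connected components are either singletons or single simple cycles, and the condensation is a DAG. Once you have that, a backbone can be defined canonically as a simple path in the condensation from the SCC of $q_0$ to an accepting SCC, enriched with entry and exit states for each cycle SCC visited; the run on any accepted word then determines a unique backbone (by determinism of the DFA), and the set of words with a given backbone is exactly $v_1 w_1^* v_2 \cdots w_s^* v_{s+1}$ where the $w_i$ are the full loops read at the chosen exit states and the $v_i$ are the fixed inter-SCC connecting words. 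Determinism gives disjointness for free, which is exactly the refinement the paper notes over [BM]. With that made precise, your argument stands.
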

\begin{proof} See \cite[Proposition 7.1]{BM}.  The disjointness given in item (vi) is not explicitly stated in \cite[Proposition 7.1]{BM}, but it is straightforward to see that the languages one obtains can be taken to be disjoint.
\end{proof}
Given the connection between regular languages over the alphabet $\{0,1,\ldots ,k-1\}$ and $k$-automatic sets given in 
\S\ref{prelim}, we can naturally extend the notion of sparseness to $k$-automatic sets as follows.  Given a subset $S\subseteq \mathbb{N}$, we say that $S$ is a \emph{sparse} $k$-automatic set if $\{(n)_k \in \{0,1,\ldots ,k-1\}^*\colon n\in S\}$ is a sparse sublanguage of $\{0,1,\ldots ,k-1\}^*$.  If one translates conditions (i) and (ii) of Proposition \ref{sparse} into this context, we obtain the following well-known dichotomy.
\begin{theorem} \label{thm:dich}
Let $S \subseteq \mathbb{N}$ be a $k$-automatic set and let $\pi_S(x) = \#\{n\in S \colon n\le x\}$ for $x\ge 0$. Then one of the following alternatives must hold:
\begin{enumerate}
\item[(1)] there exists $d \geq 1$ such that $\pi_S(n) = O((\log n)^d)$ as $n \rightarrow \infty$; or
\item[(2)] there exists a real number $\alpha > 0$ such that $\pi_S(n) > n^{\alpha}$ for all sufficiently large $n$.
\end{enumerate}
\end{theorem}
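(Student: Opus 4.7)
The plan is to reduce the dichotomy directly to the language-theoretic dichotomy in Proposition \ref{sparse}. Since $S$ is $k$-automatic, the language $\mathcal{L} := \{(n)_k : n \in S\} \subseteq \Sigma_k^*$ is regular, and the map $n \mapsto (n)_k$ is a bijection from $S$ to $\mathcal{L}$ sending a natural number in $[0, k^\ell)$ to a word of length at most $\ell$. In particular $\pi_S(k^\ell - 1) = f_{\mathcal{L}}(\ell)$ for every $\ell \ge 1$.

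In the sparse case, Proposition \ref{sparse}(i) gives $d \ge 1$ with $f_{\mathcal{L}}(\ell) = O(\ell^d)$. For $x \ge k$, setting $\ell = \lfloor \log_k x \rfloor + 1$ yields $x < k^\ell$ and hence $\pi_S(x) \le f_{\mathcal{L}}(\ell) = O((\log x)^d)$, which is alternative (1).

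In the non-sparse case, the failure of Proposition \ref{sparse}(iii) supplies $u, v \in \Sigma_k^*$ and non-trivial $a, b \in \Sigma_k^+$ with $|a| = |b| = r$, $a \ne b$, and $u\{a,b\}^* v \subseteq \mathcal{L}$. Since every non-empty word in $\mathcal{L}$ begins with a non-zero digit, a brief case analysis (on whether $u$ is empty, and if so noting that the first letters of $a$ and $b$ must both be non-zero, since $a^m v$ and $b^m v$ lie in $\mathcal{L}$) shows that every non-empty word in $u\{a,b\}^*v$ is itself a valid base-$k$ expansion. For each $m \ge 1$, the sublanguage $u\{a,b\}^m v$ consists of $2^m$ pairwise distinct words of common length $L_m := |u| + mr + |v|$ (distinct because $a \ne b$ have equal length, so different choice sequences differ letterwise), and these represent $2^m$ distinct elements of $S$ in $[0, k^{L_m})$. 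Thus $\pi_S(k^{L_m}) \ge 2^m$.

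The one remaining technical step is to upgrade this sequence of lower bounds into a pointwise estimate, and it is the only point that requires any care. Fix any $\alpha$ with $0 < \alpha < (\log 2)/(r \log k)$, so that $k^{\alpha r} < 2$. Given large $x$, choose the unique $m$ with $k^{L_m} \le x < k^{L_{m+1}} = k^{L_m + r}$; by monotonicity $\pi_S(x) \ge 2^m$, while $x^\alpha < k^{\alpha(|u|+|v|+r)} (k^{\alpha r})^m$, and the ratio $2^m / x^\alpha$ grows exponentially with $m$. Hence $\pi_S(x) > x^\alpha$ for all sufficiently large $x$, giving alternative (2). This interpolation works because consecutive break-points $k^{L_m}$ differ only by the bounded factor $k^r$; absent such bounded gaps one would only obtain the lower bound along a subsequence, so this is the main thing to check beyond invoking Proposition \ref{sparse}.
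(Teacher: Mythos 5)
Your proof is correct, and it fills in an argument that the paper itself leaves implicit: in the text, Theorem \ref{thm:dich} is presented as a ``well-known dichotomy'' obtained by ``translating conditions (i) and (ii) of Proposition \ref{sparse}'', with no details supplied. Two small observations about how your version compares to that remark.

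First, your instinct to route the non-sparse case through condition~(iii) of Proposition~\ref{sparse} rather than (ii) is the right call, and actually somewhat more careful than the paper's one-line remark. The negation of (ii) only gives that $f_{\mathcal L}(n)$ is not $o(C^n)$ for any $C>1$, which by itself is a statement about a $\limsup$ and does not immediately yield the everywhere-eventually lower bound $\pi_S(n) > n^{\alpha}$ demanded in alternative~(2). Condition~(iii), negated, hands you the structured sublanguage $u\{a,b\}^*v\subseteq\mathcal{L}$, and that is exactly what produces the lower bound $2^m$ at the scale $k^{L_m}$ for every $m$, which your interpolation then converts into a pointwise bound. You correctly identified the bounded-gap property $L_{m+1}-L_m=r$ as the crux of that last step.

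Second, the ``brief case analysis'' on whether $u$ is empty is harmless but unnecessary: the inclusion $u\{a,b\}^*v\subseteq\mathcal{L}$ already forces every word in $u\{a,b\}^*v$ to lie in $\mathcal{L}=\{(n)_k : n\in S\}$, and every non-empty element of $\mathcal{L}$ is by construction a canonical base-$k$ expansion with non-zero leading digit. So validity is automatic and you may skip that paragraph. Everything else --- the bijection $n\mapsto(n)_k$, the reduction $\pi_S(k^\ell-1)=f_{\mathcal{L}}(\ell)$ in the sparse case, the distinctness of the $2^m$ words via equal block length, and the choice $\alpha < (\log 2)/(r\log k)$ --- is correct and cleanly stated.
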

Then sparse sets are precisely $k$-automatic sets for which there is some $d\ge 1$ such that $\pi_S(n) = O((\log n)^d)$, and this ``gap'' result shows there is a clear delineation between sparse and non-sparse $k$-automatic sets.  We record this more formally.

\begin{definition}
Let $k\ge 2$ be a natural number and let $S$ be a $k$-automatic set. We say that $S$ is \emph{sparse} if condition (1) in Theorem \ref{thm:dich} holds. \label{def:sparse}
\end{definition}

An interesting feature of Proposition \ref{sparse} is that it shows that a sparse language $\mathcal L$ is a finite disjoint union of languages of the form $v_1 w_1^* v_2 w_2^* \dots v_s w_s^* v_{s+1}$ where $s \geq 0$ and the $v_i$ are possibly trivial words and the $w_i$ are non-trivial words. We shall call languages of this special form \emph{simple sparse languages}.  

Then every sparse language is a finite union of disjoint simple sparse languages and if one translates this into sparse $k$-automatic sets, we see that a sparse $k$-automatic set can be written as a disjoint union 
\begin{equation}
\label{eq:sqcup}
S = S_1 \sqcup S_2 \sqcup \cdots \sqcup S_d
\end{equation} for some integer $d \geq 1$, where each $S_i$ is a set of natural numbers of the form 
\begin{equation}
\label{eq: sparse1}
\left\{ [v_1 w_1^{n_1} v_2 w_2^{n_2} \dots v_s w_s^{n_s} v_{s+1}]_k \colon n_1,n_2,\ldots ,n_s\ge 0\right\}.\end{equation} 

We call a set of natural numbers of this form a \emph{simple sparse} $k$-automatic set. Sparse sets are related to \emph{$p$-normal sets} in \cite{Derksen} and that 
simple sparse sets coincide with the sets $$U_p(v_{s+1}, v_{s}, \dots, v_1; w_s,\ldots ,w_1)$$ that are defined in \cite[Definition 7.8]{Derksen}.  

A straightforward computation involving geometric series gives the following remark. 
\begin{remark} \label{rem:sparse}
Let $k\ge 2$ be a natural number and let $S$ be a non-empty simple sparse $k$-automatic set. Then there exist $s\ge 0$, $c_0,\ldots ,c_s\in \mathbb{Q}$ such that $(k^{\ell}-1)c_i\in \mathbb{Z}$ for some $\ell\ge 0$, $c_0 + c_1 + \cdots + c_s \in \mathbb{Z}_{\ge 0}$ and positive integers $\delta_1,\ldots ,\delta_s$ such that
\begin{equation} \label{eq: form}
S = \left\{ c_0 + c_1 k^{\delta_s n_s} + c_2 k^{\delta_s n_s + \delta_{s-1} n_{s-1}}\cdots + c_s k^{\delta_s n_s+\cdots +\delta_1 n_1} \colon n_1,\ldots ,n_s\ge 0\right\}.\end{equation}
Moreover $n\ge c_0$ for all $n\in S$ and $c_0\in S$ if and only if $s=0$.
\end{remark}

\begin{proof} Let $[v_i]_k=a_i$ for $i=1, \ldots, s+1$ and $[w_i]_k=b_i$ for $i=1, \ldots, s$. Note that each $b_i$ is strictly positive  since the $w_i$ are non-trivial words. Let $\mu_i = \length(v_i) \ge 0$ and $\delta_i = \length(w_i) \geq 1$. Then we compute the value of $[v_1 w_1^{n_1} v_2 w_2^{n_2} \cdots v_s w_s^{n_s} v_{s+1}]_k $ as follows.
\begin{align*}
& [v_1 w_1^{n_1} v_2 w_2^{n_2} \cdots v_s w_s^{n_s} v_{s+1}]_k\\
&= a_{s+1} + k^{\mu_{s+1}} \left( b_s + k^{\delta_{s}} b_s + \cdots + k^{\delta_{s} (n_s-1)} b_s \right) +k^{\mu_{s+1} + n_s \delta_s} a_s \\
&+ k^{\mu_{s+1} + \mu_s + n_s \delta_s} \left( b_{s-1} + k^{\delta_{s-1}} b_{s-1} + \cdots + k^{\delta_{s-1} (n_{s-1}-1)} b_{s-1} \right) + \cdots\\
&+ k^{\mu_{s+1} + \mu_{s} + \mu_{s-1} + \cdots + \mu_2 + n_s \delta_s + \cdots + n_2 \delta_2} \left( b_1 + k^{\delta_1} b_1 + \cdots + k^{\delta_1 (n_1 - 1)} b_1 \right) \\
&+ a_1 k^{\mu_{s+1} + \cdots + \mu_2 + \delta_s n_s + \cdots + \delta_1 n_1} \\
&= a_{s+1} + k^{\mu_{s+1}} b_s \left( \frac{k^{n_s \delta_s}-1}{k^{\delta_s}-1} \right)+ k^{\mu_{s+1} + n_s \delta_s} a_s + k^{\mu_{s+1} + \mu_s + n_s \delta_s} b_{s-1} \left( \frac{k^{\delta_{s-1} n_{s-1}}-1}{k^{\delta_{s-1}}-1} \right) + \cdots \\
&+ k^{\mu_{s+1} + \mu_s + \cdots \mu_2 + n_s \delta_s + \cdots + n_2 \delta_2} b_1 \left( \frac{k^{\delta_1 n_1}-1}{k^{\delta_1}-1} \right) + a_1 k^{\mu_{s+1} + \mu_s + \cdots + \mu_2 + n_s \delta_s + \cdots + n_1 \delta_1}.
\end{align*}
The result follows, taking $c_0 := a_{s+1} - \frac{k^{\mu_{s+1}}b_s}{k^{\delta_s}-1}$, 
$$c_1 := \frac{k^{\mu_{s+1}}b_s}{k^{\delta_s}-1}+k^{\mu_{s+1}}a_s - \frac{k^{\mu_{s+1}+\mu_s} b_{s-1}}{k^{\delta_{s-1}}-1}, \dots, c_s := a_1 k^{\mu_{s+1} + \mu_s + \cdots + \mu_2} + \frac{k^{\mu_{s+1} + \mu_s + \cdots + \mu_2}b_1}{k^{\delta_1}-1}.$$
We observe that if $s\ge 1$ then $c_0<a_{s+1}=[v_{s+1}]_k$ and since every element of $S$ is at least as large as $[v_{s+1}]_k$ we then see that $n\ge c_0$ for every $n\in S$ and $c_0\in S$ if and only if $s=0$.  Taking $n_1=n_2=\cdots =n_s=0$ and using the fact that $S$ consists of nonnegative integers, we see that $c_0+\cdots +c_s\in \mathbb{Z}_{\ge 0}$.
\end{proof}

\begin{definition}
Let $p$ be a prime and let $q$ be a power of $p$.  Given an algebraic power series $F(t) = \sum_{n \ge 0} f(n)t^n \in \mathbb{F}_q[[t]]$, we call $F(t)$ \emph{sparse} if the support of $F(t)$ is a sparse $p$-automatic set; that is, if $\{n \colon f(n) \neq 0 \}$ is sparse.
\end{definition}



We now extend the notion of sparseness to subsets of $S_k\subseteq \mathbb{Q}$ with $k\ge 2$ a natural number. Following Kedlaya \cite{K}, we work with the alphabet $\{0,1,\ldots ,k-1,\, _{\bullet}\}$, where $_{\bullet}$ represents the radix point in the base-$k$ expansion of a $k$-adic rational.  The set of valid base-$k$ expansions is a regular language \cite[Lemma 2.3.2]{K}, where such expansions are given by the language 
\begin{equation}
\label{eq:E}
\mathcal{E}_k := \{ u = u_1 u_2 \dots u_n \in \Sigma^* \colon n\ge 1, u_1 \neq 0, u_n \neq 0, \text{ exactly one of } u_1, \dots, u_n \text{ is equal to}~_{\bullet} \}.
\end{equation}

By the definition of sparseness for languages, a sublanguage $\mathcal L$ of $\mathcal{E}_k $ is sparse if $f_{\mathcal{L}}(n) = {\rm O}(n^d)$ for some $d \ge 1$. If $\mathcal L$ is sparse then by Proposition \ref{sparse}, it is a finite union of languages of the form $u_1 w_1^* u_2 w_2^* \dots w_s^* u_{s+1}$, where the $u_i$ are possibly trivial and the $w_i$ are non-trivial words. Furthermore by the definition of the language $\mathcal{E}_k $, exactly one of $\{u_1, \dots, u_{s+1}\}$ contains the radix point and none of the $\{ w_1, w_2, \dots, w_s \}$ can contain the radix point. Hence for a language $u_1 w_1^* u_2 w_2^* \dots w_s^* u_{s+1}$, there is a unique index $j$, $1 \le j \le s+1$, such that $u_j$ contains the radix point and so we can write this $u_j$ as $u_j'\, _{\bullet} u_j''$, and a sparse sublanguage $\mathcal{L}$ of $\mathcal{E}_k $ can be expressed as a finite disjoint union of languages of the form $$u_1 w_1^* u_2 w_2^* \dots w_{j-1}^* u_j'\,  _{\bullet}u_j'' w_j^* u_{j+1} \dots w_s^* u_{s+1}.$$ For our applications, we will be interested in the case where $k=p$ is prime and we are viewing elements of $\mathcal{E}_p$ as base-$p$ expansions of elements of $S_p$ via the map
$[\, \cdot \,]_{p}$ given in Equation (\ref{eq:valid}). In analogy with our definition of sparse subsets of the natural numbers, we will say that a subset of $S_p$ of the form
\[ \left\{[u_1 w_1^{n_1} u_2 w_2^{n_2} \dots w_{j-1}^{n_{j-1}} u_j' \, _{\bullet} u_j'' w_j^{n_j} u_{j+1} \dots w_s^{n_s} u_{s+1}]_p \colon n_1, n_2, \dots, n_s \ge 0\right\} \]
is a \emph{simple sparse} subset of $S_p$, and we will say that a subset $S\subseteq S_p$ is a \emph{sparse} subset of $S_p$ if $S$ is a finite union of simple sparse subsets of $S_p$. If $S\subseteq \mathbb{N}$ then it is immediate that being sparse as a subset of $S_p$ exactly coincides with the notion of sparseness introduced for $p$-automatic subsets of the natural numbers given in Definition \ref{def:sparse}.  

We now shift our focus back to generalized Laurent series. Given a $p$-quasi-automatic generalized Laurent series $F(t)=\sum_{\alpha\in \mathbb{Q}} f(\alpha) t^{\alpha}\in \bar{\mathbb{F}}_p((t^{\mathbb{Q}}))$ with support $S\subseteq \mathbb{Q}$, there is a power $q$ of $p$ such $f(\alpha)\in \mathbb{F}_q$ by Theorem \ref{thmK} and there are integers $a>0$ and $b$ such that $aS+b \subseteq S_p$ and $f_{a,b}: S_p \rightarrow \mathbb{F}_q$ given by $f_{a,b}(x) = f((x-b)/a)$ is $p$-automatic in the sense of Definition \ref{defpquasi}. We will say that $F(t)$ is a  \emph{sparse generalized Laurent series} if $aS+b \subseteq S_p$ is a sparse subset of $S_p$, where $a$ and $b$ are as above. 
\begin{remark}\label{rem:count}
Let $S\subseteq S_p$ be $p$-automatic.  Then $S$ is sparse if and only if $$\#\{a\in S \colon a<p^n ~{\rm and}~p^n a\in \mathbb{N}\} = {\rm O}(n^d)$$ for some positive integer $d$.
\end{remark}
\begin{proof} Let $\mathcal{L}\subseteq \mathcal{E}_k$ be the regular language $\{(x)_p \colon x\in S\}$.  Then $S$ is sparse if and only if $\mathcal{L}$ is a sparse.  Notice that 
$$\#\{a\in S \colon a< p^n ~{\rm and}~p^n a\in \mathbb{N}\}=\#\{u\, _{\bullet} v \in \mathcal{L} \colon {\rm length}(u),{\rm length}(v)\le n\}.$$
The set $\{u\, _{\bullet} v \in \mathcal{L} \colon {\rm length}(u),{\rm length}(v)\le n\}$ is a subset of the set of words in $\mathcal{L}$ of length at most $2n+1$ and so if $\mathcal{L}$ is sparse, $\#\{a\in S \colon a<p^n ~{\rm and}~p^n a\in \mathbb{N}\} = {\rm O}(n^d)$ for some $d\ge 0$.  Conversely, if $\mathcal{L}$ is not sparse, then it contains a sublanguage of the form
$u\{y,z\}^* v$, where exactly one of $u$ and $v$ contains the radix point.  Let $\kappa$ denote the maximum of the lengths of $y$ and $z$.  Then in either case, that every element of the form $[uwv]_p$, with $w$ a word in $\{y,z\}^*$ of length at most $(2\kappa)^{-1} n$, is in
$\{a\in S \colon a<p^n ~{\rm and}~p^n a\in \mathbb{N}\}$ for $n$ sufficiently large.  Since the number of words of length at most $(2\kappa)^{-1} n$ in $\{y,z\}^*$ grows exponentially in $n$,  $\#\{a\in S \colon a<p^n ~{\rm and}~p^n a\in \mathbb{N}\} \neq {\rm O}(n^d)$ when $\mathcal{L}$ is not sparse.
\end{proof}
\begin{definition}
Let $S$ be a (not necessarily $p$-automatic) subset of $S_p$. We say that $S$ is \emph{weakly sparse} if $$\#\{a\in S \colon a<p^n ~{\rm and}~p^n a\in \mathbb{N}\} = {\rm O}(n^d)$$ for some positive integer $d$. In particular, a subset of $S_p$ is sparse if and only if it is $p$-automatic and weakly sparse, and an automatic subset of a weakly sparse set is sparse.
\end{definition}

The following remark follows immediately from Remark \ref{rem:count}
\begin{remark} \label{affine}
If $a,a'$ and $b,b'$ are rational numbers with $a,a'>0$ then if $S\subset \mathbb{Q}$ has the property that both $aS+b$ and $a'S+b'$ lie in $S_p$ then $aS+b$ is sparse if and only if $a'S+b'$ is sparse, and so this definition of sparseness does not depend upon the choice of affine transformation that we use to push $S$ into the $p$-adic rationals.  
\end{remark}

  Before the next remark we recall that $\mathbb{Z}_{(p)}$ is the subring of rational numbers of the form $a/b$ with $a,b$ integers and $p\nmid b$.

\begin{remark}
Let $p$ be a prime number and let $S$ be a non-empty well-ordered simple sparse subset of $S_p$. Then there exist $s\ge 0$ and $c_0,\ldots ,c_{j-1}\in \mathbb{Z}_{(p)}$ and $d_{j-1},\ldots ,d_s\in \mathbb{Q}$ and positive integers $\delta_1,\ldots ,\delta_s$ such that
\begin{equation}
\label{eq:split}
\begin{split}
S = \{c_0 &+ c_1 p^{\delta_{j-1} n_{j-1}} + c_2 p^{\delta_{j-1} n_{j-1} + \delta_{j-2} n_{j-2}}\cdots + c_{j-1} p^{\delta_{j-1} n_{j-1}+\cdots +\delta_1 n_1} \\
&+ d_{j-1} + d_j p^{-\delta_j n_j} + d_{j+1} p^{-(\delta_j n_j + \delta_{j+1} n_{j+1})} + \cdots+ d_{s} p^{-(\delta_j n_j + \cdots + \delta_s n_s)} \colon n_1,\ldots ,n_s\ge 0 \}.
\end{split}
\end{equation}
Furthermore, we have
$$c_1 p^{\delta_{j-1} n_{j-1}} + c_2 p^{\delta_{j-1} n_{j-1} + \delta_{j-2} n_{j-2}}\cdots + c_{j-1} p^{\delta_{j-1} n_{j-1}+\cdots +\delta_1 n_1} \ge 0$$ for all $n_1,\ldots ,n_{j-1}\ge 0$ and
$$ d_j p^{-\delta_j n_j} + d_{j+1} p^{-(\delta_j n_j + \delta_{j+1} n_{j+1})} + \cdots+ d_{s} p^{-(\delta_j n_j + \cdots + \delta_s n_s)}\le 0$$ for all $n_j,\ldots ,n_s\ge 0$. 
\label{rem39}
\end{remark}

\begin{proof} Let $[u_i]_p=a_i$ and $\mu_i=\length(u_i)$ for $i \in \{1, \dots, s\} \setminus \{j\}$, $[u_j']_p=a_j'$, $[u_{j}'']_p=a_{j}''$, $\mu_j'=\length(u_j')$, $\mu_j''=\length(u_j'')$, and $[w_i]_p=b_i$ and $\delta_i = \length(w_i)$ for $i=1, \dots, s$. The pre-radix part can be handled as in Remark \ref{rem:sparse} and the post-radix part is handled similarly as follows. Putting
$d_{j-1} = a_j'' - \frac{b_j p^{-\mu_j''}}{p^{-\delta_j}-1}$, $d_j=\frac{p^{-\mu_j''} b_j}{p^{-\delta_j}-1}+p^{-\mu_j''}a_{j+1}-\frac{b_{j+1}p^{-\mu_j''-\mu_{j+1}}}{p^{-\delta_{j+1}}-1}$, \dots, 
$d_s = p^{-\mu_j''-\mu_{j+1} - \cdots - \mu_{s}} a_{s+1} + \frac{b_s p^{-\mu_j'' - \mu_{j+1} - \cdots - \mu_s}}{p^{-\delta_s}-1}$, we get the desired description of $S$.  The inequalities $$c_1 p^{\delta_{j-1} n_{j-1}} + c_2 p^{\delta_{j-1} n_{j-1} + \delta_{j-2} n_{j-2}}\cdots + c_{j-1} p^{\delta_{j-1} n_{j-1}+\cdots +\delta_1 n_1} \ge 0$$ for all $n_1,\ldots ,n_{j-1}\ge 0$ and
$$ d_j p^{-\delta_j n_j} + d_{j+1} p^{-(\delta_j n_j + \delta_{j+1} n_{j+1})} + \cdots+ d_{s} p^{-(\delta_j n_j + \cdots + \delta_s n_s)}\le 0$$ for all  for all $n_j,\ldots ,n_{s}\ge 0$ come from the fact that $S$ is well-ordered.  To obtain the first inequality, suppose that $$\Psi(n_1,\ldots ,n_{j-1}):=c_1 p^{\delta_{j-1} n_{j-1}} + c_2 p^{\delta_{j-1} n_{j-1} + \delta_{j-2} n_{j-2}}\cdots + c_{j-1} p^{\delta_{j-1} n_{j-1}+\cdots +\delta_1 n_1} <0$$ for some $n_1,\ldots ,n_{j-1}\ge 0$.  Then since
$$\Psi(n_1,\ldots ,n_{j-1}+a)=p^{\delta_{j-1} a} \Psi(n_1,\ldots ,n_{j-1})$$ for $a\ge 0$ and $\delta_{j-1} >0$, we obtain an infinite descending subsequence in $S$, contradicting the fact that it is well-ordered. The second inequality follows in a similar manner.
\end{proof}

The collection of sparse series forms a subalgebra of the ring of algebraic power series with coefficients in $\bar{\mathbb{F}}_p$.  This is in fact rather straightforward, but for the sake of completeness, we include a proof (see Proposition \ref{propsparse}); in addition, we show that sparse series possess natural closure properties, which we detail below. 
\begin{definition} \label{closureprops}
Let $B\subseteq C$ be subalgebras of the ring of generalized Laurent series $\bar{\mathbb{F}}_p((t^{\mathbb{Q}}))$. We say that $B$ is \emph{Artin-Schreier closed in} $C$ if the following hold:
\begin{enumerate}
\item[(P1)] if $F(t)\in B$ and if $G(t)\in C$ is a solution to the equation $X^p-X+F(t)=0$, then $G(t) \in B$;
\item[(P2)] If $F(t)\in B$ and $\alpha\in \bar{\mathbb{F}}_p$ then $F(\alpha t)\in B$;
\item[(P3)] if $F(t) \in B$, $c\in \mathbb{Q}_{>0}$ and $d\in \mathbb{Q}$, and $t^dF(t^c) \in C$, then $t^dF(t^c)\in B$.
\end{enumerate}
\label{def:AS}
\end{definition}
We make a remark concerning property (P1). In general, a generalized Laurent series $F(t)$ can be written as $F_{+}(t)+c + F_{-}(t)$, where $c$ is constant, $F_{+}\in \bar{\mathbb{F}}_p[[t^{\mathbb{Q}}]]_{>0}$ and $F_{-}(t) \in \bar{\mathbb{F}}_p((t^{\mathbb{Q}}))_{<0}$.  Then there is some $a\in \bar{\mathbb{F}}_p$ such that $a^p-a=c$ and all solutions to $X^p-X=-F(t)$ are of the form $G_{+}(t)+G_{-}(t)+a+i$, where $i\in \mathbb{F}_p$, $G_{+}(t)= F_{+}(t)+F_{+}(t^p)+F_{+}(t^{p^2})+\cdots $ and $G_{-}(t)=-F_{-}(t^{1/p})-F_{-}(t^{1/p^2}) -\cdots$.  
In the case when $C$ is the ring of formal power series and $B$ is a subalgebra of $C$, condition (P1) simply says that if $F(t)\in B$ and $F(0)=0$ then $F(t)+F(t^p)+F(t^{p^2})+\cdots $ is also in $B$.  Our goal is to show that various rings of sparse algebraic series are Artin-Schreier closed in natural overrings.  To do this, we need a quick lemma about sparse subsets of $S_p$.
\begin{lemma} Let $p$ be prime, let $b$ be a positive integer and let $S\subseteq S_p$ be a well-ordered sparse set. Then we have the following:
\label{LEM1}
\begin{enumerate}
\item[(a)] $S\cap [0,b)$ and $S\cap (b,\infty)$ are both sparse;
\item[(b)] if $T:=S\cap (b,\infty)$ then $\bigcup_{n\ge 0} \left((T-b)p^n+b\right)$ is a well-ordered sparse set;
\item[(c)] if $U:=S\cap [0,b)$ then $\bigcup_{n\ge 1} \left((U-b)p^{-n}+b\right)$ is a well-ordered sparse set.
\end{enumerate}
\end{lemma}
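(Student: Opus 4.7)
The plan is to invoke the characterization that a subset of $S_p$ is sparse if and only if it is both $p$-automatic and weakly sparse (Remark~\ref{rem:count} and the surrounding discussion), so in each part I need only establish these two properties, plus well-orderedness in parts (b) and (c). Part (a) is immediate: whether the integer part of an element of $S_p$ lies below or above the fixed integer $b$ is decidable by a finite-state check on the pre-radix prefix (together with a test for whether the expansion has any post-radix tail), so $[0,b)\cap S_p$ and $(b,\infty)\cap S_p$ are automatic; intersecting with the automatic sparse set $S$ preserves automaticity, and any subset of the weakly sparse set $S$ is weakly sparse.

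For part (b), write $R=\bigcup_{n\ge 0}\bigl((T-b)p^n+b\bigr)$ and set $e:=\min(T-b)>0$, which exists because $T-b$ is a well-ordered subset of $\mathbb{Q}_{>0}$. Well-orderedness of $R$ follows by ruling out a strictly decreasing sequence $y_k=(x_k-b)p^{n_k}+b$: if $n_k$ is bounded one passes to a constant subsequence and obtains a decreasing sequence in $T$, impossible; if $n_k\to\infty$ then $y_k-b\ge e\,p^{n_k}\to\infty$, contradicting decrease. For $p$-automaticity, the map $x\mapsto(x-b)p^n+b$ with $n\ge 0$ free acts on base-$p$ expansions as a composition of finite-state transducers---subtract the integer $b$, shift the radix point right by an arbitrary $n\ge 0$ (appending zeros as needed), and add $b$---so $R$ is the image of the regular language for $T$ under a rational relation and is regular. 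For weak sparseness, I would bound $\#\{y\in R\colon y<p^N,\ p^Ny\in\mathbb{N}\}$ by counting pairs $(x,n)$ with $x\in T$, $n\ge 0$, $(x-b)p^n<p^N-b$, and $p^{N+n}x\in\mathbb{N}$: for $0\le n\le N$ these force the valid-expansion length $L(x)\le 2N+O(1)$, giving $O(N^d)$ choices of $x$ per $n$ and $O(N^{d+1})$ in total by the sparseness of $T$, while for $n>N$ the inequality $x-b<p^{N-n}<e$ restricts $n$ to an interval of length $O(1)$, contributing only $O(N^d)$.

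Part (c) proceeds in parallel for $R':=\bigcup_{n\ge 1}\bigl((U-b)p^{-n}+b\bigr)\subseteq[0,b)$. Well-orderedness: a strictly decreasing $y_k=(u_k-b)p^{-n_k}+b$ with $n_k$ bounded reduces after thinning to a decreasing sequence in $U$, while $n_k\to\infty$ forces $y_k\to b^-$, neither compatible with strict decrease below $b$. Automaticity follows by writing $y=b-(b-u)p^{-n}$ and composing the transducers for ``subtract from $b$'', ``shift the radix point left by any $n\ge 1$'', and ``subtract from $b$'' again. Weak sparseness: $p^Ny\in\mathbb{N}$ for $y=(u-b)p^{-n}+b$ is equivalent to $N\ge n+\ell(u)$ where $\ell(u)$ is the post-radix length of $u$; summing the number of valid $n$ over $u$ gives $\sum_{u\in U,\,\ell(u)\le N-1}(N-\ell(u))\le N\cdot\#\{u\in U\colon L(u)\le N+O(1)\}=O(N^{d+1})$, using sparseness of $U$ (from part (a)) and the fact that $L(u)=\ell(u)+O(1)$ since $u<b$.

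The main obstacle is making the transducer constructions for automaticity fully rigorous under the $\mathcal{E}_p$-validity conventions (nonzero first and last digits, unique radix point), particularly in boundary cases where shifts produce leading zeros or subtractions involve borrows crossing the radix point. An alternative self-contained route avoids transducers by decomposing $T$ and $U$ into finite unions of simple sparse sets via Remark~\ref{rem39} and then case-splitting on which of the exponents $n-\sum\delta_k n_k$ (resp.\ $-n-\sum\delta_k n_k$) become nonnegative, exhibiting $R$ and $R'$ directly as finite unions of simple sparse subsets of $S_p$ at the cost of more bookkeeping.
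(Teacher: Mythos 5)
Your part (a) matches the paper's. For (b) and (c), your overall strategy---verify well-orderedness, $p$-automaticity, and weak sparseness separately, then invoke the ``automatic and weakly sparse implies sparse'' characterization---is sound, and your well-orderedness and weak-sparseness counting arguments are essentially correct. The genuine gap is the automaticity step. You assert that the image of the regular language for $T$ (resp.\ $U$) under the relation ``subtract $b$, shift the radix point by an arbitrary $n$, add $b$'' is regular because this is a rational transduction, but you never construct or describe the transducers, and you explicitly flag this as ``the main obstacle,'' correctly identifying that the $\mathcal{E}_p$-validity constraints (nonzero leading and trailing digits, unique radix point) interact non-trivially with carries, borrows across the radix point, and stripped trailing zeros when the shift parameter $n$ is unbounded. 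Without exhibiting these transducers, the automaticity claim is unproved, and weak sparseness alone does not yield sparseness.

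The paper sidesteps this entirely by reducing to the case of a single simple sparse set, writing the associated sublanguage of $\mathcal{E}_p$ explicitly in the form $u_1 w_1^* \cdots u_j' \, {}_{\bullet} \, u_j'' \cdots w_s^* u_{s+1}$, and observing that applying the family of radix-point shifts $\bigcup_n p^n T'$ (resp.\ $\bigcup_n p^{-n}$) produces a finite union of languages of the same explicit shape, split into cases according to where the new radix point lands. This gives automaticity and sparseness in a single step with no transducer machinery and no weak-sparseness counting. For part (c) it additionally uses a digit-complementation identity for $1-[{}_{\bullet}w]_p$ to realize ``subtraction from $b$'' at the language level. The alternative route you mention in your final paragraph, via Remark~\ref{rem39} and a case split on exponents, is essentially this argument and would close the gap, but as written you have not carried it out, so the proof as it stands is incomplete.
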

\begin{proof} To prove (a), let $S \subseteq S_{p}$ be a sparse set. Applying the map $(\,\cdot\,)_p$ to $S$ we obtain a sparse regular language $\mathcal{L} \subseteq \mathcal{E}_{p}$. Both $S_p \cap [0,b)$ and $S_p \cap (b, \infty)$ correspond to regular languages and hence $S \cap [0,b)$ and $S \cap (b, \infty)$ correspond to regular languages as well. Moreover, since they are sublanguages of the sparse language $\mathcal{L}$, both $S\cap [0,b)$ and $S\cap (b,\infty)$ are sparse.

We now prove part (b). By part (a), we know that $T$ is sparse. By Remark \ref{affine}, affine transformations preserve sparseness, so $T':=T-b$ is a well-ordered sparse set. Since $T'$ is a finite disjoint union of simple sparse sets, it is no loss of generality to assume that $T'$ is a simple sparse set in what follows.
We let $\mathcal{L}_{T'}$ denote the sublanguage of $\mathcal{E}_p$ obtained by applying $(\,\cdot \,)_p$ to $T'$.
Then since we are assuming that $T'$ is simple sparse, $\mathcal{L}_{T'}$ is a language of the form
$$u_1 w_1^* u_2 w_2^* \dots w_{j-1}^* u'_j\,  _{\bullet}u''_j w_j^* u_{j+1} \dots w_s^* u_{s+1}.$$
Consequently, the sublanguage of $\mathcal{E}_p$ obtained by applying $(\,\cdot \,)_p$ to $\bigcup p^n T'$ is a finite union of languages of the form \begin{enumerate}
\item $u_1 w_1^* u_2 w_2^* \cdots u_s w_s^* u_{s+1}0^* \, _{\bullet} \,$; 
\item $u_1 w_1^* u_2 w_2^* \dots w_{j_0-1}^* u'_{j_0}\,  _{\bullet}u''_{j_0} w_{j_0}^* u_{j_0+1} \dots w_s^* u_{s+1}$; and
\item $u_1 w_1^* u_2 w_2^* \dots w_{i_0-1}^* u_{i_0} w_{i_0}^* w'_{i_0}\,  _{\bullet} w''_{i_0} w_{i_0}^* u_{i_0+1} \dots w_s^* u_{s+1}$,
\end{enumerate}
where $i_0\in \{j,\ldots ,s\}$ and $w'_{i_0}w''_{i_0} = w_{i_0}$, and $j_0\in \{j,\ldots, s+1\}$, $u'_{j_0}u''_{j_0} = u_{j_0}$, and if $j_0=j$ then $u_{j_0}'$ has $u_j'$ as a prefix.
Hence $\bigcup p^n T'$ is a sparse set and since affine transformations preserve sparseness, we have $$b+\bigcup p^n T'=\bigcup_{n\ge 0} \left((T-b)p^n+b\right)$$ is sparse.
To see that $\bigcup_{n\ge 0} \left((T-b)p^n+b\right)$ is well-ordered, it suffices to show that the union of $p^n T'$ is well-ordered. Let $t_0>0$ denote the smallest element of $T'$ and suppose that $x_1\ge x_2\ge \cdots $ is a weakly decreasing chain in the union of the sets $p^n T'$.  Then there is some $N>0$ such that $x_1< p^N t_0$ and hence $x_1,x_2,x_3,\ldots$ must be contained in the finite union $\bigcup_{i< N} p^i T'$, which is well-ordered, as it is a finite union of well-ordered subsets of $\mathbb{Q}$, and so the chain $x_1\ge x_2\ge \cdots$ necessarily terminates. Thus we have established part (b).

Finally, we prove part (c). The proof that $U':=\bigcup_{n\ge 1} \left((U-b)p^{-n}+b\right)$ is well-ordered is done exactly as in the proof of part (b).  Thus it only remains to show that this set is sparse.  We write $U=\bigcup_{a=0}^{b-1} U_a$, where $U_a=\{x\in U\colon a\le x<a+1\}$.
Then every element in $U_a$ has a base-$p$ expansion of the form $(a)_p\, _{\bullet}\, w$, where $w$ lies in a sparse sublanguage $\mathcal{C}_a$ of $\{0,1,\ldots ,p-1\}^*$. 
Then $b-U_a = (b-(a+1)) + \{ 1- [_{\bullet} w]_p  \colon w\in \mathcal{C}_a\}$. Since $\mathcal{C}_a$ is sparse, it is a finite union of simple sparse languages.  Observe, moreover, that if $x\in [0,1)\cap S_p$ is a number having base-$p$ expansion $_{\bullet} \, v_1 w_1^{*} v_2 w_2^{*} \cdots v_s w_s^{*} v_{s+1}$, with $v_{s+1}$ non-empty, then $1-x$ has base-$p$ expansion 
$$ _{\bullet} \bar{v}_2 \bar{w}_1^{*} \bar{v}_2 \bar{w}_2^{*} \cdots \bar{v}_s \bar{w}_s^{*} \tilde{v}_{s+1},$$
where if $u = a_1 a_2 \dots a_{d}$, $d \ge 1$, we define $\bar{u} := (p-1-a_1)(p-1-a_2) \dots (p-1-a_d)$ and $\tilde{u} := (p-1-a_1)(p-1-a_2) \dots (p-1-a_{d-1})(p-a_d)$.  In general, if $v_{s+1}$ is empty, one can get a similar description of the set of $1-x$ and, in this way, one can show that $\{ 1- [_{\bullet} w]_p  \colon w\in \mathcal{C}_a\}$ is sparse (although it need not be well-ordered).  Thus $b-U=\bigcup_{a=0}^{b-1} (b-U_a)$ is sparse.
Now we write $$b-U = \bigcup_{0 \le a < b} \left(a + [\, _{\bullet} \, \mathcal{D}_{a}]_p\right),$$ where each $\mathcal{D}_a$ is a sparse sublanguage of $\{0,1,\ldots ,p-1\}^*$.
If the base $p$-expansion of $a$ is equal to $a_1\cdots a_r$, then 
$$\bigcup_{n\ge1}p^{-n}  \left(a + [\, _{\bullet} \, \mathcal{D}_{a}]_p\right) = \bigcup_{i=0}^{\infty} [\, _{\bullet} 0^i a_1\cdots a_r\,_{\bullet}\, \mathcal{D}_a]_p \cup  \bigcup_{i=1}^{ r} [a_1 a_2 \dots a_i \, _{\bullet} \, a_{i+1} \dots a_r \mathcal{D}_a]_p,$$ which is a finite union of simple sparse sets, because $\mathcal{D}_a$ is a finite union of simple sparse languages.
Hence $\bigcup_{n\ge0} (b-U)p^{-n}$ is a finite union of sparse sets and thus is itself sparse. Now 
$$U'=\bigcup_{n\ge 1} \left((U-b)p^{-n}+b\right) = b - \bigcup_{n\ge1} (b-U)p^{-n} \subseteq [0,b),$$
which is sparse by the same argument as above. The result follows.
\end{proof}

We need one more basic fact.  
\begin{lemma} Let $S, T\subseteq S_p$ be well-ordered sparse sets. Then $S\cup T$ and $S+T$ are well-ordered sparse.
\label{lem:sum}
\end{lemma}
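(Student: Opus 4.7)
For $S \cup T$, both assertions are immediate from the definitions. Well-orderedness holds because any non-empty $U \subseteq S \cup T$ meets at least one of $S$ or $T$, and taking the smaller of $\min(U \cap S)$ and $\min(U \cap T)$ (interpreting an empty intersection as $+\infty$) yields $\min U$. Sparseness holds because each of $S$ and $T$ is by definition a finite union of simple sparse subsets of $S_p$, so $S \cup T$ is again a finite union of simple sparse subsets of $S_p$.

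For $S + T$, well-orderedness is the classical fact that the sumset of two well-ordered subsets of a totally ordered abelian group is well-ordered. I would prove it by contradiction: if $a_1 > a_2 > \cdots$ were an infinite strictly decreasing sequence in $S + T$ with $a_k = s_k + t_k$, then by the monotone subsequence theorem applied in the well-ordered set $S$ (which admits no infinite strictly decreasing subsequence) I could extract a subsequence $(k_i)$ along which $s_{k_i}$ is monotone non-decreasing. The differences $t_{k_i} = a_{k_i} - s_{k_i}$ would then form an infinite strictly decreasing sequence in $T$, contradicting the well-ordering of $T$.

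Sparseness of $S + T$ is the main content. Using $S + T = \bigcup_{i,j} (S_i + T_j)$ together with the definition of sparse, I reduce to the case where $S$ and $T$ are simple sparse. By Remark \ref{rem39}, every element of $S$ has the form $c_0 + d_{j_S - 1} + \sum c_i p^{\alpha_i(\vec n)} + \sum d_i p^{-\beta_i(\vec n)}$ for $\vec n \in \mathbb{Z}_{\ge 0}^{\ell}$, and analogously for $T$ in parameters $\vec m \in \mathbb{Z}_{\ge 0}^{\ell'}$, so $s + t$ is a fixed $\mathbb{Q}$-linear combination of $1$ and terms $p^{L(\vec n, \vec m)}$ for various integer linear forms $L$ in the combined parameters. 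I would partition $\mathbb{Z}_{\ge 0}^{\ell + \ell'}$ into finitely many polyhedral chambers on which the linear ordering of the exponents $\{L_i\}$ is fixed; on each chamber, after combining coincident exponents and applying a unimodular change of variables presenting the chamber as a translate of an orthant, the resulting expression for $s + t$ fits a simple sparse parameterization in the sense of Remark \ref{rem39}, exhibiting $S + T$ as a finite union of simple sparse sets. The main obstacle is executing this chamber decomposition cleanly: one has to handle the boundary cases where distinct $L_i$'s coincide (possibly producing coefficient cancellations, or promoting terms to the constant part) and triangulate each chamber via a unimodular lattice decomposition in a way that respects the prefix-sum exponent structure demanded by Remark \ref{rem39}. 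Should this case analysis become unwieldy, I would instead verify the polynomial count bound of Remark \ref{rem:count} via a direct parameter count on $(\vec n, \vec m)$ (using that pre-radix parameters are forced to be $O(n)$ by $s + t < p^n$ and that post-radix parameters are forced to be $O(n)$ by $p^n(s+t) \in \mathbb{Z}$ in the generic case, with a separate cancellation analysis otherwise) and separately establish $p$-automaticity of $S + T$ via the finite-state transducer implementing base-$p$ addition with carries on the regular languages of base-$p$ expansions of $S$ and $T$.
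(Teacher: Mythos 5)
Your treatment of $S \cup T$ and of the well-orderedness of $S + T$ is correct and essentially elementary. The sparseness of $S + T$, which you rightly identify as the main content, is not established by either of your proposed routes, and both hedges you make are where the argument actually breaks.

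For the chamber-decomposition route: Remark \ref{rem39} only goes in one direction --- it says a well-ordered simple sparse subset of $S_p$ has elements of the form in Equation (\ref{eq:split}); it does not say that a set whose elements admit such a parametrization is simple sparse. Simple sparseness is a statement about the base-$p$ expansions forming a language of the shape $u_1 w_1^* \cdots w_s^* u_{s+1}$, and that is not visible from a closed-form description of the elements, precisely because base-$p$ addition involves carries: the digit pattern of $s+t$ is not a naive interleaving of the digit patterns of $s$ and $t$. Moreover, an arbitrary unimodular change of variables on the lattice points of a chamber will destroy the nested prefix-sum structure $\delta_{j-1}n_{j-1}$, $\delta_{j-1}n_{j-1}+\delta_{j-2}n_{j-2}$, $\ldots$ that Equation (\ref{eq:split}) requires; you flag ``respecting the prefix-sum exponent structure'' as the obstacle, and it is exactly the place where a proof would have to do real work.

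For the fallback route: the ``separate cancellation analysis'' is unavoidable and non-trivial. Two elements with arbitrarily deep post-radix expansions can sum to an element with a shallow one (e.g.\ $s = p^{-N}$ and $t = 1 - p^{-N}$ sum to $1$), so $p^n(s+t)\in\mathbb{Z}$ does \emph{not} force the post-radix parameters of $s$ and $t$ individually to be $O(n)$, and you get no bound on the number of contributing pairs $(s,t)$ without controlling this. You also need $p$-automaticity of $S+T$ in addition to the count (Remark \ref{rem:count} gives sparse $=$ automatic $+$ weakly sparse); your transducer sketch for this is essentially re-deriving Kedlaya's \cite[Lemma 7.2.2]{K}, which the paper cites rather than reproves.

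The paper takes a different and considerably shorter path. It cites Kedlaya for automaticity and well-orderedness of $S\cup T$ and $S+T$ up front, then, in the simple sparse case, decomposes $S = A + X$ and $T = B + Y$ with $A,B$ sparse subsets of $\mathbb{N}$ (the pre-radix parts) and $X,Y$ well-ordered sparse subsets of $S_p\cap[0,1)$ (the post-radix parts), so that $S+T = (A+B)+(X+Y)$. The pre-radix count is controlled by $\pi_{A+B}(x)\le \pi_A(x)\pi_B(x)$. For the post-radix contribution the paper invokes Kedlaya's \cite[Theorem 7.1.6]{K}, which asserts exactly that the set of post-radix fragments of a well-ordered $p$-automatic subset of $S_p$ is a sparse $p$-automatic subset of $[0,1)$; this single quoted result absorbs all the carry and cancellation subtleties that your proposal leaves open.
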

\begin{proof}[Proof of Lemma \ref{lem:sum}]
By work of Kedlaya \cite[Lemmas 7.2.1 and 7.2.2]{K}, we have that $S\cup T$ and $S+T$ are $p$-automatic; moreover, they are well-ordered \cite[Lemma 3.1.4]{K}.  Since sparse sets can be decomposed as a finite union of simple sparse sets, and since a finite union of simple sparse sets is sparse, we see $S\cup T$ is sparse.  We now show that $S+T$ is weakly sparse, from which it will immediately follow that $S+T$ is sparse.  Since a finite union of weakly sparse sets is weakly sparse and since $S$ and $T$ are finite unions of simple sparse sets, we may assume without loss of generality that $S$ and $T$ are simple sparse.

A simple sparse subset of $S_p$ is of the form $\{[u\, _{\bullet} v]_p \colon u\in \mathcal{L}, v\in \mathcal{L}'\}$ where $\mathcal{L},\mathcal{L}'$ are simple sparse sublanguages of $\{0,1,\ldots ,p-1\}^*$.  In particular, $S= A+X$, where $A$ is a sparse subset of $\mathbb{N}$ and $X$ is a well-ordered sparse subset of $S_p\cap [0,1)$. Similarly, $T= B+Y$, where $B$ is a sparse subset of $\mathbb{N}$ and $Y$ is a sparse well-ordered subset of $S_p\cap [0,1)$. 
Hence $S+T = (A+B)+(X+Y)$.  Since $A$ and $B$ are sparse sets of natural numbers, and since $\pi_{A+B}(x) \le \pi_{A}(x)\pi_{B}(x)$, we see that $\pi_{A+B}(x) ={\rm O}((\log \, x)^d)$ for some $d\ge 0$.  In particular, since a sum of two $p$-automatic subsets of natural numbers is again a $p$-automatic set of natural numbers \cite[Theorem 5.6.3]{AS}, $A+B$ is a sparse $p$-automatic subset of $\mathbb{N}$. Now let $\mathcal{L}\subseteq \mathcal{E}_p$ be the regular language $\{(x)_p \colon x\in S+T\}$.  Since $X+Y\subseteq [0,2)$ and is well-ordered \cite[Lemma 3.1.4]{K}, if $u\, _{\bullet} v\in \mathcal{L}$ then $[u]_p\in (A+B)\cup (A+B+1)$ for $u\, _{\bullet} v\in \mathcal{L}$; moreover, by a result of Kedlaya \cite[Theorem 7.1.6]{K}, there is a sparse $p$-automatic subset $Z$ of $S_p\cap [0,1)$ such that $\{[v]_p \colon u\, _{\bullet} v\in \mathcal{L}\}=Z$.  Thus 
$$\# \{ a\in S+T\colon a<p^n~{\rm and}~p^n a\in \mathbb{N}\} \le \pi_{(A+B)\cup (A+B+1)}(p^n)\cdot \#\{a\in Z\colon p^n a\in \mathbb{N}\}.$$ Then by the above remarks and Remark \ref{rem:count}, both $\pi_{(A+B)\cup (A+B+1)}(p^n)$ and $\#\{a\in Z\colon p^n a\in \mathbb{N}\}$ are bounded by polynomials in $n$ and thus we obtain the result from Remark \ref{rem:count}.
\end{proof}

\begin{proposition} \label{propsparse}
Let $p$ be prime.  Then we have the following:
	\begin{enumerate}
		\item[(a)] the collection of sparse algebraic power series in $\bar{\mathbb{F}}_p[[t]]$ forms a subalgebra of the ring of algebraic power series; moreover this subalgebra is Artin-Schreier closed in $\bar{\mathbb{F}}_p[[t]]$;
		\item[(b)] the collection of sparse algebraic generalized series in $\bar{\mathbb{F}}_p((t^{\mathbb{Q}}))$ forms a subalgebra of the ring of generalized Laurent series; moreover this subalgebra is Artin-Schreier closed inside $\bar{\mathbb{F}}_p((t^{\mathbb{Q}}))$.
	\end{enumerate}
\end{proposition}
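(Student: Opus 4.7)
The plan is to first establish the subalgebra structure, then verify each of the closure properties (P1)--(P3). Parts (a) and (b) are closely parallel, and (a) will follow from (b) upon restricting to ordinary power series. For the subalgebra structure, if $F,G$ are sparse algebraic (generalized Laurent) series with supports $S_F,S_G$, then $\Supp(F+G)\subseteq S_F\cup S_G$ and $\Supp(FG)\subseteq S_F+S_G$. After choosing a common positive-scale affine transformation pushing both supports into $S_p$, Lemma~\ref{lem:sum} gives that unions and sums of well-ordered sparse subsets of $S_p$ remain well-ordered and sparse, so $F+G$ and $FG$ are sparse. Algebraicity is preserved since the algebraic series form a ring.

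Properties (P2) and (P3) reduce to straightforward support calculations. For (P2), the substitution $t\mapsto \alpha t$ leaves $\Supp(F)$ unchanged when $\alpha\neq 0$ and yields a constant when $\alpha=0$. For (P3), the operator $F(t)\mapsto t^dF(t^c)$ transforms the support via the positive-scale affine map $S\mapsto cS+d$, which preserves sparseness by Remark~\ref{affine}. Algebraicity is preserved in each case: in (P3), the result lies in an algebraic extension of $\bar{\mathbb{F}}_p(t^{1/N})$ for any $N$ with $cN,dN\in\mathbb{Z}$, which is in turn algebraic over $\bar{\mathbb{F}}_p(t)$.

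The main work is (P1). In part (a), any power series solution $G$ of $X^p-X+F=0$ with $F\in B$ and $F(0)=0$ has support contained in $\bigcup_{j\ge 0}p^j\Supp(F)$; since $\Supp(F)\subseteq \mathbb{N}_{>0}$, the shifted set $\Supp(F)+1$ lies in $(1,\infty)\cap S_p$, so Lemma~\ref{LEM1}(b) with $b=1$ combined with Remark~\ref{affine} shows $\Supp(G)$ is sparse. In part (b), writing $F=F_++c+F_-$ as in the remark following Definition~\ref{closureprops}, a solution $G=G_++G_-+a+i$ has $\Supp(G_+)\subseteq \bigcup_{j\ge 0}p^j\Supp(F_+)$ and $\Supp(G_-)\subseteq \bigcup_{j\ge 1}p^{-j}\Supp(F_-)$. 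The plan is to use Lemma~\ref{LEM1}(a) to split $\Supp(F)$ (after an affine push into $S_p$) into positive and negative pieces, apply Lemma~\ref{LEM1}(b) to the positive piece and Lemma~\ref{LEM1}(c) to the negative piece, and assemble the result via Lemma~\ref{lem:sum}. Algebraicity of $G$ is immediate since $G$ is a root of $X^p-X+F$.

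The main obstacle lies in the generalized case of (P1): one must choose the affine normalizations carefully so that the positive and negative parts of $\Supp(F)$ individually sit inside $S_p$ as Lemma~\ref{LEM1} requires, and verify that the infinite union $\bigcup_{j\ge 1}p^{-j}\Supp(F_-)$ remains well-ordered upon reinterpretation via a positive-scale affine map into $S_p$ --- this is precisely what Lemma~\ref{LEM1}(c) provides.
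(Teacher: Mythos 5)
Your proposal is correct and follows essentially the same route as the paper: closure under sum and product is obtained from $\Supp(F+G)\subseteq S_F\cup S_G$, $\Supp(FG)\subseteq S_F+S_G$ together with Lemma~\ref{lem:sum} (after an affine push into $S_p$), and property (P1) is obtained by splitting the support via Lemma~\ref{LEM1}(a) and applying Lemma~\ref{LEM1}(b) and (c) to the pre- and post-radix pieces. The only differences are organizational: you deduce (a) as a special case of (b), whereas the paper argues (a) directly (for the product using $\pi_{S_F+S_G}(x)\le\pi_{S_F}(x)\pi_{S_G}(x)$ and for (P1) using the fact that the support language lies inside $\mathcal{L}\cdot\{0\}^*$); and you reduce the power-series case of (P1) to Lemma~\ref{LEM1}(b) with a shift by $1$, which works but is slightly more circuitous than the paper's direct language argument. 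Both routes are valid and the key lemmas invoked are the same.
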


\begin{proof}
We let $A$ denote the collection of sparse algebraic power series in $\bar{\mathbb{F}}_p[[t]]$.  Then to show that $A$ is a subalgebra, it is sufficient to show that it is closed under sum and multiplication.
Let $F(t),G(t) \in A$ and let $S_F$ and $S_G$ denote the supports of $F$ and $G$ respectively. Then the support of $(F+G)(t)$ is contained in $S_F\cup S_G$ and since $S_F$ and $S_G$ are sparse then we see from the characterization of sparseness given in Theorem \ref{thm:dich} that $S_F\cup S_G$ is sparse and so $F+G$ is a sparse algebraic power series.
The support of $F(t)G(t)$ is contained in $S_F+S_G$, where $S_F+S_G$ is the collection of natural numbers that can be expressed in the form $a+b$ with $a\in S_F$ and $b\in S_G$.
If we define $\pi_S(x) = \#\{n\le x\colon n\in S\}$ for $x\ge 0$ for a subset $S$ of the natural numbers, then
$\pi_{S_F+S_G}(x) \le \pi_{S_F}(x)\pi_{S_G}(x)$ and so again from the characterization of sparseness given in Theorem \ref{thm:dich}, $F(t)G(t)$ is a sparse algebraic power series. 
The only property from $(P1)$--$(P3)$ in Definition \ref{def:AS} that does not obviously hold for sparse series is property (P1).  Suppose that $F(t)$ is a sparse series with $F(0)=0$ and let $S$ be the support of $F$. Then if we let $S'$ denote the 
support of $G(t):=F(t)+F(t^p)+\cdots $, then we see that $S'$ is contained in $T:=\cup_{n\ge 0} (p^n\cdot S)$. 
 Then if we look at the language 
$\mathcal{L}:=\{(n)_k \colon n\in S\}$ then $\{(n)_k \colon n\in S'\}\subseteq \mathcal{L}\cdot \{0\}^*$, which is sparse by by Equations (\ref{eq:sqcup}) and (\ref{eq: sparse1}).  Since sparse languages are closed under the process of taking regular sublanguages, the support of $G(t)$ is sparse.

For part (b), we must show that if $F(t), G(t)\in \bar{\mathbb{F}}_p((t^{\mathbb{Q}}))$ are sparse then so are $F(t)+G(t)$ and $F(t) G(t)$.  Let $S_F$ and $S_G$ denote the supports of $F$ and $G$ respectively. After replacing $F$ and $G$ by $t^b F(t^a)$ and $t^b G(t^a)$ for some positive rational numbers $a$ and $b$, we may assume that $S_F, S_G\subseteq S_p$.  Then the supports of $F(t)+G(t)$ and $F(t)G(t)$ are contained in $S_F\cup S_G$ and $S_F+S_G$ respectively, and since the supports of $F(t)+G(t)$ and $F(t)G(t)$ are $p$-automatic and well-ordered, we then see they are sparse by Lemma \ref{lem:sum}.

To show the property of being Artin-Schreier closed holds, it is again enough to prove that (P1) holds. Let $F(t)$ be a sparse generalized power series and again let $S$ denote its support. By assumption there are integers $a$ and $b$ with $a,b>0$ such that $T:=Sa+b\subseteq S_p$ is automatic, sparse, and well-ordered. By Lemma \ref{LEM1}, $T_{+}:=T\cap (b,\infty)$ and $T_{-}:=T\cap [0,b)$ are both sparse automatic subsets of $S_p$.  Let $S_{+}= (T_{+}-b)/a$ and $S_{-}=(T_{-}-b)/a$.  By the remarks following Definition \ref{closureprops}, if $G(t)$ is a solution to the equation $X^p-X+F(t)=0$ and if $U$ denotes the support of $G$, then $U$ is contained in the union of $S_1:=\bigcup_{n\ge 0} p^n S_{+}$, $S_2:=\bigcup_{n\ge 1} p^{-n} S_{-}$, and $\{0\}$. Then let $T_1=S_1 a +b\subseteq S_p$ and let $T_2=S_2a+b$. Then since automatic subsets of sparse sets are sparse, it suffices to show that $T_1$ and $T_2$ are sparse.  But $T_1=\bigcup_{n\ge 0} ((T_{+}-b)p^n+b)$ and $T_2=\bigcup_{n\ge 1} \left((T_{-}-b)p^{-n}+b\right)$, so $T_1$ and $T_2$ are sparse by Lemma \ref{LEM1}, and so the result follows.
\end{proof}


\section{Proof of Theorem \ref{thm:main} $({\it a})$} \label{mainres}
In this section and the next, we will complete the proof of Theorem \ref{thm:main}.  In fact, we will prove a somewhat more general version of Theorem \ref{thm:main} that deals with Kedlaya's extension of Christol's theorem.  Before giving this more general statement, we find it convenient to fix the following notation.  

\begin{notation} \label{not} We adopt the following notation:
\begin{enumerate}[(1)]
\item we let $p$ be prime and we let $q$ be a power of $p$;
\item we let $K=\bar{\mathbb{F}}_p(t^{1/n}\colon n\ge 1, p\nmid n)$;
\item we let $R$ denote $\mathbb{F}_p[t^{\pm 1/n} \colon n\ge 1, p\nmid n]$;
\item we let $L$ denote the compositum of all Galois extensions of $K$ of order a power of $p$;
\item we let $L_0$ denote the elements $G\in L$ such that $K[G(t)]/K$ is unramified outside $0$ and $\infty$;
\item for each $\lambda\in \mathbb{P}^1_{\bar{\mathbb{F}}_p}$, we let $\nu_{\lambda}$ be the valuation of $K$ induced by taking the order of vanishing at $t=\lambda$ (this valuation is discrete when $\lambda\in \bar{\mathbb{F}}_p^*=\mathbb{P}^1\setminus \{0,\infty\}$);
\item given a finite Galois extension $E$ of $K$, we let $\mathcal{V}_{E}\subseteq E$ be the set of elements $a\in E$ such that $\nu(a)\not\in \{-p,-2p,-3p,\ldots \}$ for all rank-one discrete valuations $\nu$ of $E$ with $\nu|_K = \nu_{\lambda}$ for some $\lambda\in \bar{\mathbb{F}}_p^*$;
\item given a finite Galois extension $E$ of $K$, we let $\mathcal{V}_{E,+}\subseteq E$ denote the set of elements $a\in E$ such that $\nu(a)\ge 0$ for all discrete valuations $\nu$ of $E$ with $\nu|_K = \nu_{\lambda}$ for some $\lambda\in \bar{\mathbb{F}}_p^*$;
\item we let $C$ denote the smallest non-trivial $\bar{\mathbb{F}}_p$-subalgebra of $\bar{\mathbb{F}}_p[[t]]$ that is Artin-Schreier closed in the power series ring $\bar{\mathbb{F}}_p[[t]]$;
\item we let $\widetilde{C}$ denote the smallest non-trivial $\bar{\mathbb{F}}_p$-subalgebra of $\bar{\mathbb{F}}_p((t^{\mathbb{Q}}))$ that is Artin-Schreier closed in the generalized Laurent series ring $\bar{\mathbb{F}}_p((t^{\mathbb{Q}}))$;
\item we let $B$ denote the collection of generalized power series $G(t)$ such that for some $j\ge 0$, $G(t^{p^j})\in L_0$ and is integral over $R$;
\item we let $A$ denote the ring of sparse algebraic power series and we let $\widetilde{A}$ denote the ring of sparse algebraic generalized Laurent series. 
\end{enumerate}
\end{notation}

In terms of generalized series, we have the following more general version of Theorem \ref{thm:main}.

\begin{theorem} \label{thm:main2} Let $p$ be prime and adopt the notation of Notation \ref{not}. Then $$\widetilde{A}=B=\widetilde{C}.$$\end{theorem}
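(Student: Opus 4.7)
The plan is to establish Theorem \ref{thm:main2} by a cycle of three inclusions: $\widetilde{C}\subseteq\widetilde{A}\subseteq B\subseteq\widetilde{C}$. The first is essentially formal: by Proposition \ref{propsparse}(b), the ring $\widetilde{A}$ is a nontrivial $\bar{\mathbb{F}}_p$-subalgebra of $\bar{\mathbb{F}}_p((t^{\mathbb{Q}}))$ that is Artin-Schreier closed, so by minimality in the definition of $\widetilde{C}$ we obtain $\widetilde{C}\subseteq\widetilde{A}$.

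For $\widetilde{A}\subseteq B$, I would first verify that $B$ is closed under sums---compositums of extensions of $K$ unramified outside $\{0,\infty\}$ remain unramified outside $\{0,\infty\}$, and sums of elements integral over $R$ are integral---so it suffices to handle each piece of a decomposition of the support of $F\in\widetilde{A}$. After pulling back via $t\mapsto t^{p^j}$ and applying an affine transformation as in Definition \ref{defpquasi}, I may assume the support of $F$ lies in $S_p$; decomposing via Remark \ref{rem39}, each simple sparse piece unfolds as an iterated construction of series of the form $\sum_{n\ge 0}G(t)^{p^{\delta n}}$, which satisfies the monic additive equation $X^{p^\delta}-X+G(t)=0$. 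Each such step generates an elementary abelian $p$-extension whose ramification is confined to the support of $G$---inductively, to $\{0,\infty\}$---and which is integral over $R$ via its defining equation. Composing these steps places $F$ in $L_0$ as an integral element over $R$, i.e., in $B$.

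For $B\subseteq\widetilde{C}$, I would first observe $R\subseteq\widetilde{C}$: applying (P3) to the constant $1\in\widetilde{C}$ with arbitrary rational $d$ yields every $t^d\in\widetilde{C}$, and algebra closure produces all Laurent expressions in $R$. Given $F\in B$ with $G(t):=F(t^{p^j})$ in $L_0$ integral over $R$, Remark \ref{rem:Galois} supplies a tower of Artin-Schreier extensions $K=K_0\subseteq K_1\subseteq\cdots\subseteq K_m\ni G$. I would induct on $i$ to prove that every element of $K_i$ integral over $R$ lies in $\widetilde{C}$. At the inductive step, any such $x\in K_{i+1}$ is a polynomial in the Artin-Schreier generator $\alpha_i$ (with $\alpha_i^p-\alpha_i=a_i\in K_i$) with coefficients integral over $R$. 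Standard Artin-Schreier descent combined with unramifiedness outside $\{0,\infty\}$ lets one replace $a_i$ modulo $\{b^p-b : b\in K_i\}$ by an element integral over $R$, which by the inductive hypothesis lies in $\widetilde{C}$; then (P1) produces $\alpha_i\in\widetilde{C}$, and algebra closure gives $x\in\widetilde{C}$. Applied to $G$ itself, this yields $G\in\widetilde{C}$; a final application of (P3) with $c=1/p^j$ and $d=0$ recovers $F(t)=G(t^{1/p^j})\in\widetilde{C}$.

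The main obstacle, in my estimation, will be verifying in the second step that the combinatorial shape of a simple sparse support forces the associated series to generate a $p$-power Galois extension of $K$ rather than one with a nontrivial tame part; the nested structure of Remark \ref{rem39} must be unpacked into an explicit tower of additive Artin-Schreier-type extensions, while simultaneously tracking the $p$-power nature of the Galois closure and the invariance of integrality and unramifiedness at each level. A secondary subtlety, in the third step, is the Artin-Schreier descent: one must invoke the fact recorded in the excerpt---that compositums of extensions of $K$ unramified at a given place are themselves unramified there---in order to ensure that each intermediate generator $a_i$ can indeed be chosen in the integral closure of $R$ in $K_i$.
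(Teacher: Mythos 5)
Your cycle $\widetilde{C}\subseteq\widetilde{A}\subseteq B\subseteq\widetilde{C}$ differs from the paper's route, which establishes $\widetilde{A}=\widetilde{C}$ first (via Lemma~\ref{lem: gap} and Lemma~\ref{lemsparse}) and then proves $\widetilde{C}\subseteq B$ and $B\subseteq\widetilde{A}$ separately. Your route, if completed, would dispense with Lemma~\ref{lem: gap} --- the Vandermonde/combinatorial Nullstellensatz step that synthesizes $\sum_n H^{p^{\delta n}}$ from $p$-power Frobenius sums so that (P1) alone suffices --- because $B$ is closed under $H\mapsto\sum_n H^{p^{\delta n}}$ directly via the monic additive relation $X^{p^{\delta}}-X+H=0$, and Remark~\ref{rem:Galois} then breaks $B\subseteq\widetilde{C}$ into degree-$p$ Artin--Schreier steps where only the plain (P1) is invoked. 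That is a genuine economy. As written, though, the argument has two real gaps.

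In the step $B\subseteq\widetilde{C}$ you assert that an element $x\in K_{i+1}$ integral over $R$ expands as $x=e_0+e_1\alpha_i+\cdots+e_{p-1}\alpha_i^{p-1}$ with each $e_j\in K_i$ \emph{integral over} $R$, and feed the $e_j$ into the inductive hypothesis. This is true but is precisely the delicate point: integrality of $x$ does not by itself give integrality of the coefficients in the $\alpha_i$-basis. The paper proves it by observing that the Galois automorphism $\sigma(\alpha_i)=\alpha_i+1$ fixes $K$ and hence preserves $B$, so the finite-difference operator $\Delta=\sigma-\mathrm{id}$ preserves $B$; applying $\Delta$ and descending from $j=p-1$ recovers $j!\,e_j$ in $B$, and $j!\not\equiv 0\pmod p$ for $j<p$ then yields $e_j\in B$. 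Without this (or an equivalent argument that the integral closure of $R$ in $K_{i+1}$ equals the one in $K_i$ adjoined $\alpha_i$, which follows from $f'(\alpha_i)=-1$ being a unit) the induction does not close. Second, replacing $a_i$ modulo $\{b^p-b\colon b\in K_i\}$ by an integral element is the content of Lemma~\ref{lem: VL}: a nontrivial approximation argument that combines the Chinese Remainder Theorem at the finitely many offending places with high Frobenius powers of approximate idempotents, followed by the observation that unramifiedness plus the resulting non-$p$-divisible negative valuations forces all the residual valuations to be nonnegative. It is not ``standard'' in the sense of being immediate. Finally, for $\widetilde{A}\subseteq B$ you correctly identify the unpacking of Remark~\ref{rem39} into iterated Frobenius sums with the interleaved monomial and $t\mapsto t^N$ substitution factors as the main outstanding obstacle; this is exactly the content of Lemma~\ref{lemsparse}, and none of it is carried out here.
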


In terms of the above notation, Theorem \ref{thm:main} can be stated as $A=B\cap \bar{\mathbb{F}}_p[[t]]=C$, and so Theorem \ref{thm:main2} is an extension of Theorem \ref{thm:main} in the setting of generalized Laurent series.

In this section, we prove the equalities $A=C$ and $\widetilde{A}=\widetilde{C}$. Proposition \ref{propsparse} shows that $A$ is Artin-Schreier closed in $\bar{\mathbb{F}}_p[[t]]$ and that $\widetilde{A}$ is Artin-Schreier closed in $\bar{\mathbb{F}}_p((t^{\mathbb{Q}}))$.  In particular, we already have shown that we have the containments $C\subseteq A$ and $\widetilde{C}\subseteq \widetilde{A}$. Thus the main content of this section is to prove the reverse inclusion.  The key result in this direction is the following lemma.

\begin{lemma}
\label{lem: gap}
Adopt the notation of Notation \ref{not} and let $d\ge 1$.  Then we have the following:
\begin{enumerate}
\item 
if $F(t)\in C$ and $F(0)=0$ then $F(t)+F(t)^{p^d}+F(t)^{p^{2d}}+\cdots \in C$;
\item if $F(t)\in \widetilde{C}\cap \bar{\mathbb{F}}_p[[t^{\mathbb{Q}}]]_{>0}$ then $F(t)+F(t)^{p^d}+F(t)^{p^{2d}}+\cdots \in \widetilde{C}$ and if $F(t)\in \widetilde{C}\cap \bar{\mathbb{F}}_p((t^{\mathbb{Q}}))_{<0}$ then $F(t)^{p^{-d}}+F(t)^{p^{-2d}}+F(t)^{p^{-3d}}+\cdots \in \widetilde{C}$.
\end{enumerate}
\end{lemma}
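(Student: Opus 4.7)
The plan is to prove both parts by factoring the additive polynomial $\pi_d(X) := X^{p^d} - X$ as a composition of $d$ degree-$p$ additive polynomials and then running the resulting chain of Artin-Schreier problems backwards, invoking (P1) and (P2) at each stage.

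The key preliminary fact is that there exist $a_1, \ldots, a_d \in \bar{\mathbb{F}}_p^\ast$ such that
\[
X^{p^d} - X \;=\; L_d\!\bigl(L_{d-1}\!\bigl(\cdots L_1(X)\cdots\bigr)\bigr), \qquad L_i(X) := X^p - a_i X.
\]
To establish this I would fix any maximal flag of $\mathbb{F}_p$-subspaces $0 = V_0 \subset V_1 \subset \cdots \subset V_d = \mathbb{F}_{p^d}$, pick $v_i \in V_i \setminus V_{i-1}$, and take $L_i$ to be the monic additive polynomial of degree $p$ whose kernel is the $\mathbb{F}_p$-line $\mathbb{F}_p \cdot (L_{i-1}\circ\cdots\circ L_1)(v_i) \subset \bar{\mathbb{F}}_p$ (nonzero since $v_i \notin V_{i-1} = \ker(L_{i-1}\circ\cdots\circ L_1)$). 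An easy induction gives $\ker(L_i\circ\cdots\circ L_1) = V_i$, so both $L_d \circ \cdots \circ L_1$ and $\pi_d$ are monic additive polynomials over $\bar{\mathbb{F}}_p$ with kernel $\mathbb{F}_{p^d}$ and hence coincide. Each $a_i$ is nonzero because $L_{i-1} \circ \cdots \circ L_1$ is injective on $V_i/V_{i-1}$.

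For part~(1), let $F \in C$ with $F(0) = 0$ and put $H := \sum_{k \ge 0} F(t)^{p^{kd}} \in \bar{\mathbb{F}}_p[[t]]$; a direct expansion gives $\pi_d(H) = -F$. Set $Z_0 := H$ and $Z_i := L_i(Z_{i-1})$ for $i = 1, \ldots, d$, so $Z_d = -F$. I will prove by downward induction on $i$ that $Z_i \in C$ and $Z_i(0) = 0$. For the inductive step, pick $c_i \in \bar{\mathbb{F}}_p^\ast$ with $c_i^{p-1} = a_i$ (possible because $\bar{\mathbb{F}}_p$ is algebraically closed) and substitute $Z_{i-1} = c_i W$ into $Z_{i-1}^p - a_i Z_{i-1} = Z_i$; a short rearrangement using $c_i^{p-1} = a_i$ converts this to the Artin-Schreier equation $W^p - W = c_i^{-p} Z_i$, whose right-hand side lies in $C$ and vanishes at $t = 0$, so (P1) yields a solution $W \in C$, which we may arrange to satisfy $W(0) = 0$ by adjusting an $\mathbb{F}_p$-constant. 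Then $Z_{i-1} = c_i W \in C$ by (P2). Since $Z_0$ and $H$ both solve $\pi_d(X) = -F$ and any two such solutions differ by a constant in $\ker(\pi_d) \cap \bar{\mathbb{F}}_p = \mathbb{F}_{p^d}$, the common vanishing $Z_0(0) = H(0) = 0$ forces $Z_0 = H$.

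Part~(2) proceeds along the same lines with $\widetilde{C}$ in place of $C$, using that $\widetilde{C}$ is Artin-Schreier closed in $\bar{\mathbb{F}}_p((t^{\mathbb{Q}}))$ by Proposition~\ref{propsparse}. When $F$ has positive support, the series $H = \sum_k F^{p^{kd}}$ is a well-defined generalized power series and the argument is word-for-word the same. When $F$ has negative support, I would instead work with $G := \sum_{k \ge 1} F(t)^{p^{-kd}}$, checking that the supports $p^{-kd}\,\mathrm{supp}(F) \subseteq [\min\mathrm{supp}(F),\,0)$ form a well-ordered union and that $G^{p^d} - G = F$; running the analogous chain $Z_d = F, Z_{d-1}, \ldots, Z_0 = G$ and using the negative-support Artin-Schreier solutions described in the remark following Definition~\ref{def:AS} at each step produces $Z_i \in \widetilde{C}$ with support in $\mathbb{Q}_{<0}$. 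Uniqueness of $Z_0$ against the $\mathbb{F}_{p^d}$-ambiguity is enforced by the fact that no nonzero constant has support in $\mathbb{Q}_{<0}$. The main technical care required is this propagation of normalization (vanishing at $t=0$ in part~(1) and the positive-support subcase of part~(2); support in $\mathbb{Q}_{<0}$ in the negative-support subcase) through the $d$-step chain; once this is handled, the proof reduces to a routine descending induction along the factorization of $\pi_d$.
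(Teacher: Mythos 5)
Your proof is correct but follows a genuinely different route from the paper's. The paper's argument is a single ``horizontal'' computation: for suitable scalars $a_1,\dots,a_d\in\mathbb F_{p^d}$ it forms the series $H_i:=\sum_{j\ge 0}(a_iF)^{p^j}$, each lying in $C$ by one application of (P1), and then uses the invertibility of the Moore matrix $\bigl(a_i^{p^{j-1}}\bigr)_{i,j}$ (obtained there via Alon's Combinatorial Nullstellensatz) to extract scalars $c_1,\dots,c_d$ with $\sum_i c_ia_i^{p^j}=\delta_{j\bmod d,\,0}$; then $\sum_i c_iH_i=\sum_{k\ge 0}F^{p^{kd}}$. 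Your argument is ``vertical'': you factor the additive polynomial $X^{p^d}-X$ as a composition $L_d\circ\cdots\circ L_1$ of separable degree-$p$ additive polynomials $L_i(X)=X^p-a_iX$ with $a_i\ne 0$, and then descend the chain $Z_d=-F,\ Z_{d-1},\dots,Z_0=H$ by solving a degree-$p$ Artin--Schreier problem at each step, the scaling $Z_{i-1}=c_iW$ with $c_i^{p-1}=a_i$ putting each $L_i$-equation into the $X^p-X$ normal form demanded by (P1). Your approach is more structural---it exhibits the $p^d$-Artin--Schreier operator as a composition of $p$-Artin--Schreier operators, which is morally why the closure property propagates---at the cost of a bit more bookkeeping, while the paper's is shorter once one accepts the Moore-determinant nonvanishing. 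Two small corrections to your write-up: passing from $W\in C$ to $Z_{i-1}=c_iW\in C$ is justified by the $\bar{\mathbb F}_p$-algebra structure of $C$, not by (P2), which governs the substitution $t\mapsto\alpha t$ rather than scalar multiplication of the series; and the normalization fuss about arranging $W(0)=0$ and then matching $Z_0$ against $H$ is unnecessary, since (P1) as phrased in Definition~\ref{def:AS} already places \emph{every} solution of the Artin--Schreier equation lying in the ambient ring $\bar{\mathbb F}_p[[t]]$ into $C$, in particular the specific series $W=c_i^{-1}Z_{i-1}$ determined by your chain from $Z_0:=H$.
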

\begin{proof}
We only prove part (i), with the proof of part (ii) being handled similarly.
Let $d\ge 1$ and let $x_1,\ldots ,x_d$ be commuting indeterminates.  We let
$A(x_1,\ldots ,x_d)\in M_d(\mathbb{F}_p[x_1,\ldots ,x_d])$ be the matrix whose $(i,j)$-entry is $x_i^{p^{j-1}}$.
Then $f:=\det(A(x_1,\ldots ,x_d))$ is a homogeneous polynomial in the variables $x_1,\ldots ,x_d$ of total degree $p^{d-1}+\cdots + p+1$ with the property that the coefficient of 
$$\prod_{i=1}^d x_i^{p^{i-1}}$$ is nonzero.  By Alon's combinatorial Nullstellensatz \cite[Theorem 1.2]{Alon}, there is some $(a_1,\ldots ,a_d)\in \bar{\mathbb{F}}_p^d$ such that $\det(A(a_1,\ldots ,a_d))\neq 0$, since 
$p^d$ is greater than the degree of $f$. Now let $B=A(a_1,\ldots ,a_d)$. By construction $\det(B)$ is nonzero, and so there is some 
$(c_1,\ldots ,c_d)\in \bar{\mathbb{F}}_p^{1\times d}$ such that 
$(c_1,\ldots ,c_d)B = (1,0,0,\ldots ,0)$.  In other words,
$\sum_{i=1}^d c_i a_i^{p^j} = \delta_{j,0}$ for $j=0,1,\ldots ,d-1$.  Moreover, since $a_i^{p^d}=a_i$ for $i=1,\ldots ,d$, we in fact have
\[ \sum_{i=1}^d c_i a_i^{p^j} = \left\{ \begin{array}{ll} 1 & {\rm if~}j\equiv 0~(\bmod \, d), \\
0 & {\rm otherwise}. \end{array} \right. \]
For $i=1,\ldots ,d$, we let $H_i(t) = a_i F(t) + a_i^p F(t)^p + \cdots $.  Then since $a_i F(t)\in C$ and $C$ is Artin-Schreier closed, we have $H_i(t)\in C$ for $i=1,\ldots ,d$.  Thus
$$\sum_{i=1}^d c_i H_i(t) = F(t)+F(t)^{p^d}+F(t)^{p^{2d}}+\cdots \in C.$$
The proof of part (ii) is done similarly, using the fact that $a_i^{1/p}= a_i^{p^{d-1}}$.
Hence we obtain the desired result.
\end{proof}

We next require a lemma concerning power series whose support set is a simple sparse set.
\begin{lemma} Let $p$ be prime and adopt the notation of Notation \ref{not}. Then the following hold:
\label{lemsparse}
\begin{enumerate}
\item if $S\subseteq \mathbb{N}$ is a non-empty simple sparse subset of $\mathbb{N}$, then $$G(t):=\sum_{n\in S} t^n ~~~{\it is~ in} ~C,$$
\item if $S\subseteq S_p$ is a non-empty well-ordered simple sparse subset of $S_p$, then $$G(t):=\sum_{\alpha \in S} t^{\alpha}~~~{\it is~ in} ~\widetilde{C}.$$
\end{enumerate}
\end{lemma}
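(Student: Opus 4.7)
I would prove parts (i) and (ii) together by induction on the depth $s$, developing the generalized-series statement (ii) first and then extracting (i) via a rescaling argument using (P3). The key inputs are the explicit parameterizations of simple sparse sets in Remarks \ref{rem:sparse} and \ref{rem39}, which let one peel off the outermost iteration index and invoke Lemma \ref{lem: gap} inductively, together with the characteristic-$p$ Frobenius identity $K(t^{p^m}) = K(t)^{p^m}$ and its inverse $K(t^{1/p^m}) = K(t)^{1/p^m}$ (valid in $\bar{\mathbb{F}}_p((t^{\mathbb{Q}}))$ because this ring is perfect). The base case $s = 0$ gives $G(t) = t^{c_0}$: for (i), $c_0 \in \mathbb{N}$ so $t^{c_0}$ is a polynomial in $C$ (using that $t \in C$, which follows from non-triviality combined with (P3) stripping off leading $t$-powers of any non-constant element); for (ii), $c_0 \in S_p$ and $t^{c_0} \in \widetilde{C}$ follows from (P3) applied to $t$ with $c = c_0, d = 0$.

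For the inductive step of (ii), using Remark \ref{rem39} I would factor $G(t) = t^{c_0 + d_{j-1}} G_P(t) G_Q(t)$, where $G_P$ collects the pre-radix contribution from $n_1, \ldots, n_{j-1}$ and $G_Q$ the post-radix contribution from $n_j, \ldots, n_s$. For $G_P$, Frobenius yields
\[
G_P(t) = \sum_{n_{j-1} \ge 0} K(t)^{p^{\delta_{j-1} n_{j-1}}},
\]
where $K(t)$ is the depth-$(j-2)$ generating function obtained by inductive hypothesis after a constant-$c_1$ shift. The pre-radix nonnegativity inequality of Remark \ref{rem39} forces $K$ to have support in $[0, \infty)$, and the injectivity of the simple-sparse word parameterization forces $K(0) = 0$; then by induction $K \in \widetilde{C}$ and Lemma \ref{lem: gap}(ii) gives $G_P \in \widetilde{C}$. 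Symmetrically, inverse-Frobenius yields $G_Q(t) = \sum_{n_j \ge 0}(t^{d_j} K(t))^{p^{-\delta_j n_j}}$ for an appropriate $K$, with $t^{d_j} K(t)$ supported in $(-\infty, 0)$ (from the post-radix inequality in Remark \ref{rem39}), so the second half of Lemma \ref{lem: gap}(ii) places $G_Q \in \widetilde{C}$. Multiplying the two factors with $t^{c_0 + d_{j-1}} \in \widetilde{C}$ gives $G \in \widetilde{C}$.

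For part (i), I would reduce to the case of integer coefficients by substituting $t \mapsto t^M$ with $M = p^\ell - 1$ (so that $Mc_i \in \mathbb{Z}$ by Remark \ref{rem:sparse}); if one proves $G(t^M) \in C$ then (P3) with $c = 1/M, d = 0$ returns $G(t) \in C$, since $G \in \bar{\mathbb{F}}_p[[t]]$. For the integer case, the analogue of the decomposition used in (ii) produces $K \in \bar{\mathbb{F}}_p[[t]]$ with $K(0) = 0$ (the support of $K$ is now contained in the positive integers, again by the parameterization-injectivity argument), so Lemma \ref{lem: gap}(i) keeps $\sum_{n_s} K(t)^{p^{\delta_s n_s}} \in C$, and the multiplication by $t^{c_0}$ is direct when $c_0 \ge 0$ and handled by (P3) with $d = c_0, c = 1$ when $c_0 < 0$ (noting the product lies in $\bar{\mathbb{F}}_p[[t]]$ because $S \subseteq \mathbb{N}$). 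The main obstacle I anticipate is the careful bookkeeping around the positivity/negativity of intermediate coefficients (which can be negative in the explicit formulas of Remarks \ref{rem:sparse} and \ref{rem39}, as in the case $1^* 0 1^*$ with $p = 2$, even while the overall generating-function support remains positive), and ensuring that the rescaling $t \mapsto t^M$ in part (i) is done so that the entire inductive construction descends back from $\widetilde{C}$ to $C$ via a final application of (P3).
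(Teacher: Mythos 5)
Your proposal matches the paper's argument in all essentials: both rely on the explicit parameterizations of simple sparse sets from Remarks \ref{rem:sparse} and \ref{rem39}, induct on the depth $s$, rescale by $N = p^\ell - 1$ (your $M$) to clear denominators, peel off the outermost iteration index via the Frobenius identity to produce a sum $\sum_j H(t)^{p^{j\delta}}$, invoke Lemma \ref{lem: gap}, and use (P3) to reattach the monomial factor and undo the rescaling, with the post-radix part of (ii) handled by the negative-power half of Lemma \ref{lem: gap}. The only differences are presentational (the paper proves (i) first and quotes it for the integer part $G_1$ inside the proof of (ii), whereas you fold that pre-radix argument into the induction for (ii)), and your parenthetical justification that $t \in C$ by ``stripping off leading $t$-powers'' is a little off---stripping produces a unit of $\bar{\mathbb{F}}_p[[t]]$, not $t$ itself---but the paper leaves the base case equally terse, so this is not a deviation from the paper's route.
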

\begin{proof}
We first give the proof of (i).  By Remark \ref{rem:sparse}, there is some $s\ge 0$ and some $c_0,\ldots ,c_s\in\mathbb{Z}_{(p)}$ and positive integers $\delta_1,\ldots ,\delta_s$ such that
\[ S = \left\{ c_0 + c_1 p^{\delta_s n_s} + \cdots + c_s p^{\delta_1 n_1+\cdots + \delta_s n_s}\colon n_1, \dots, n_s \ge 0 \right\}.\] Moreover, we have $n\ge c_0$ for all $n\in S$ and $c_0\in S$ if and only if $s=0$.
We prove that $G(t)\in C$ by induction on $s$.  When $s=0$, $G(t)$ is a monomial and the result is clear. Thus we suppose that the result holds whenever $s<m$ with $m\ge 1$ and we consider the case when $s=m$.

Then we may assume that $S$ is a set of natural numbers of the form $$\left\{c_0+c_1 p^{\delta_m n_m} + \cdots + c_m p^{\delta_1 n_1+\cdots + \delta_m n_m}\colon n_1,\ldots ,n_m\ge 0\right\}.$$  We pick a positive integer $N$ that is coprime to $p$ such that $c_i N\in \mathbb{Z}$ for $i=0,\ldots ,m$.
We let
\[ T = \left\{ Nc_1  + Nc_2 p^{\delta_{m-1} n_{m-1}} + \cdots + Nc_m p^{\delta_1 n_1+\cdots + \delta_{m-1} n_{m-1}} \colon n_1, \dots, n_{m-1} \ge 0 \right\}.\]
Then $T$ is a subset of the integers and since $m>1$, every $n\in S$ is strictly greater than $c_0$ and so $T$ is a sparse subset of the positive integers.  We let $H(t)=\sum_{n\in T} t^n$.  Then by the induction hypothesis $H(t)\in C$.
We have $$G(t^N) = \sum_{n\in N\cdot S} t^n = t^{N c_0} \sum_{j\ge 0} \left( \sum_{n\in T} t^n\right)^{p^{j\delta_m}}.$$
That is, $G(t^N) = t^{Nc_0} \left( \sum_{j\ge 0} H(t)^{p^{j\delta_m}}\right)$.  By Lemma \ref{lem: gap}, 
$$ \sum_{j\ge 0} H(t)^{p^{j\delta_m}} \in C.$$  Since $G(t^N)$ is a power series and $C$ is Artin-Schreier closed in $\bar{\mathbb{F}}_p[[t]]$, it follows that $$G(t^N ) =  t^{Nc_0} \left( \sum_{j\ge 0} H(t)^{p^{j\delta_m}}\right)\in C.$$  Since $G(t)$ is also a power series and $C$ is Artin-Schreier closed in $\bar{\mathbb{F}}_p[[t]]$, we have $G(t)\in C$.  The result follows.

The proof of (ii) is handled in a similar manner. We use Remark \ref{rem39} to show that if $S\subseteq S_p$ is a non-empty well-ordered simple sparse set of the form
\[ \begin{split}
\{c_0 &+ c_1 p^{\delta_{j-1} n_{j-1}} + c_2 p^{\delta_{j-1} n_{j-1} + \delta_{j-2} n_{j-2}}\cdots + c_{j-1} p^{\delta_{j-1} n_{j-1}+\cdots +\delta_1 n_1} \\
&+ d_{j-1} + d_j p^{-\delta_j n_j} + d_{j+1} p^{-(\delta_j n_j + \delta_{j+1} n_{j+1})} + \cdots+ d_{s} p^{-(\delta_j n_j + \cdots + \delta_s n_s)} \colon n_1,\ldots ,n_s\ge 0 \}
\end{split}\]
then we have $S=\{d_{j-1}\}+S_1+S_2$ where $S_1$ is the set
$$
\left\{c_0 + c_1 p^{\delta_{j-1} n_{j-1}} + c_2 p^{\delta_{j-1} n_{j-1} + \delta_{j-2} n_{j-2}}\cdots + c_{j-1} p^{\delta_{j-1} n_{j-1}+\cdots +\delta_1 n_1} \colon n_1, \dots, n_{j-1} \ge 0\right\}$$
and $S_2$ is the set
$$\left\{d_j p^{-\delta_j n_j} + d_{j+1} p^{-(\delta_j n_j + \delta_{j+1} n_{j+1})} + \cdots+ d_{s} p^{-(\delta_j n_j + \cdots + \delta_s n_s)} \colon n_j,\ldots ,n_s\ge 0 \right\}.$$
Then $G(t):=\sum_{\alpha \in S} t^{\alpha}$ can be written as a product $t^{d_{j-1}}G_1(t)G_2(t)$, where 
$G_i(t) = \sum_{\alpha\in S_i} t^{\alpha}$ for $i=1,2$.  Then from the above we have $G_1(t)\in C \subseteq \widetilde{C}$ and since $S_2\subseteq (-\infty,0)$, a variant of the above argument used with negative powers of $p$ and applying Lemma \ref{lem: gap} gives that $G_2(t)\in \widetilde{C}$ and so $G(t)\in \widetilde{C}$.
\end{proof}

\begin{proof}[Proof of Theorem \ref{thm:main} (a) and the equality $\widetilde{A}=\widetilde{C}$ in Theorem \ref{thm:main2}]
Proposition \ref{propsparse} gives 
$C\subseteq A$ and $\widetilde{C}\subseteq \widetilde{A}$. We want to show that $A \subseteq C$ and 
$\widetilde{A}\subseteq \widetilde{C}$. We only show that $A\subseteq C$, with the containment $\widetilde{A}\subseteq \widetilde{C}$ being handled in a similar way. 

Let $G(t) = \sum_{n=0}^{\infty} g(n) t^n \in \bar{\mathbb{F}}_p[[t]]$ be an algebraic power series with sparse support. Since $G(t)$ is algebraic, there exists a power $q$ of $p$ such that 
$G(t)\in \mathbb{F}_q[[t]]$ by Theorem \ref{thmK}. For $\alpha \in \mathbb{F}_q^*$, we define $S_{\alpha} := \{ n \in \mathbb{N} \colon g(n) = \alpha\} \subseteq \mathbb{N}$.  By assumption, $S_{\alpha}$ is sparse for each nonzero $\alpha$ in $\mathbb{F}_q$. Then we can write 
$$G(t) = \sum_{\alpha \in \mathbb{F}_q^*} \alpha \left( \sum_{n \in S_{\alpha}} t^n \right).$$ 
Since each $S_{\alpha}$ is sparse, by Equation (\ref{eq:sqcup}), each $S_{\alpha}$ admits a decomposition into disjoint sets
$$\bigsqcup_{i=1}^{d_{\alpha}} S_{\alpha,i}$$ for some integer $d_{\alpha} \geq 1$ with each $S_{\alpha,i}$ a simple sparse set. For $\alpha\in \mathbb{F}_q$ and $i=1,\ldots ,d_{\alpha}$, we define
$$G_{S_{\alpha,i}}(t):=\sum_{n \in S_{\alpha,i}} t^n.$$ Then we have
$$G(t) = \sum_{\alpha \in \mathbb{F}_q^*} \alpha \left( \sum_{i=1}^{d_{\alpha}} G_{S_{\alpha,i}}(t)\right).$$  
Now by Lemma \ref{lemsparse}, each $G_{S_{\alpha,i}}(t)$ is in $C$ and so $G(t)$ is also in $C$.  The result follows.
\end{proof}

\section{Proof of Theorem \ref{thm:main} ($b$)} \label{mainresb}
We now prove Theorem \ref{thm:main} (b), which in terms of Notation \ref{not} can be expressed as 
$A=B\cap \bar{\mathbb{F}}_p[[t]]$. In order to prove this equality, we must first obtain a description of $L_0$, which appears in the definition of $B$.

 To give a better picture of $L_0$, it is necessary to first know all valuations of $K$.  We recall that the places of the field $\bar{\mathbb{F}}_p(t)$ that are constant on $\bar{\mathbb{F}}_p$ are parametrized by the projective line over $\bar{\mathbb{F}}_p$ (see Zariski-Samuel \cite[Ch. VI, \S17]{ZS}).  Since valuations of $\bar{\mathbb{F}}_p$ are all trivial, these are in fact all places.  Each such place is a discrete valuation of $\bar{\mathbb{F}}_p(t)$ and for $\lambda\in \mathbb{P}^1\setminus \{0,\infty\}$ these valuations lift uniquely to a valuation of $K$ with the same value group as its restriction to $\bar{\mathbb{F}}_p(t)$; that is, $K$ is an extension of $\bar{\mathbb{F}}_p(t)$ that is unramified outside of $0$ and $\infty$. If we had included $p$-th power roots of $t$ in our definition of $K$, the value groups of the extensions of these valuations would necessarily increase from $\mathbb{Z}$ to $\mathbb{Z}[1/p]$.
 
We begin with a simple remark characterizing integral closure in terms of valuations that we shall use in the proof of the main theorem.  
\begin{remark} \label{val} Adopt the notation of Notation \ref{not} and let $E$ be a finite Galois extension of $K$.  Then $a\in \mathcal{V}_{E,+}$ if and only if $a$ is integral over $R$.
\end{remark}
\begin{proof} First suppose that $a\in E$ is integral over $R$. Then $a$ satisfies a non-trivial polynomial equation $a^n + r_{n-1} a^{n-1}+\cdots + r_0=0$ for some $n\ge 1$ and $r_0,\ldots ,r_{n-1}\in R$.  Then if $\mu$ is a valuation of $E$ with $\mu|_K=\nu_{\lambda}$ for some $\lambda\in \bar{\mathbb{F}}_p^*$ then $\nu(r_i)\ge 0$ for $i=0,\ldots ,n-1$. Then if $\nu(a)<0$ for some $\nu\in \mathcal{X}$, we necessarily have $\nu(a^n) = n\nu(a) < i \nu(a)\le  \nu(r_i a^i) $ for $i=0,\ldots ,n-1$, which contradicts the fact that $a^n = - (r_{n-1} a^{n-1}+\cdots + r_0)$.  Thus $a\in \mathcal{V}_{E,+}$. 
Conversely, suppose that $a\in \mathcal{V}_{E,+}$ and that $a$ is not integral over $R$.  Then since $a$ is not integral over $R$ and $a$ is necessarily nonzero we have $a$ is not integral over $R[a^{-1}]$ since otherwise, we'd have a non-trivial polynomial relation of the form $0=a^n + p_{n-1}(a^{-1}) a^{n-1} + \cdots + p_0(a^{-1})$, with each $p_i(a^{-1})\in R[a^{-1}]$, and then multiplying by a sufficiently large power of $a$ would give that $a$ is integral over $R$.  In particular, $a^{-1}$ is not a unit of the integral closure $S$ of $R[a^{-1}]$ and so there is a height one prime $Q$ of $S$ such that $a^{-1}\in Q$.   Then the local ring $S_Q$ is a discrete valuation ring and the valuation $\nu$ on $E$ induced by $Q$ gives a rank-one discrete valuation of $K$ corresponding to the valuation induced by the prime ideal $R\cap Q$ of $R$.  In particular, there is some $\lambda\in \bar{\mathbb{F}}_p^*$ such that $\nu|_K$ is equivalent to $\nu_{\lambda}$. Now by construction $a^{-1}\in Q$ and so $\nu(a^{-1})>0$ and thus $\nu(a)<0$, which contradicts the fact that $a\in \mathcal{V}_{E,+}$.  
\end{proof}
 \begin{lemma}
 Adopt the notation from Notation \ref{not}, let $E$ be a finite extension of $K$, let $\lambda\in \bar{\mathbb{F}}_p^*$, and let $\mathcal{Y}$ be the set of valuations of $E$ whose restriction to $K$ is equal to $\nu_{\lambda}$.  Then for each $\mu\in \mathcal{Y}$, there exists $\epsilon\in \mathcal{V}_{E,+}$ such that $\mu'(\epsilon-\delta_{\mu,\mu'})>0$ for all $\mu'\in \mathcal{Y}$.
 \label{CRT}
 \end{lemma}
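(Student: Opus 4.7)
The plan is to reduce the lemma to an application of the Chinese Remainder Theorem on a smooth affine curve over $\bar{\mathbb{F}}_p$, leveraging the fact that although $K$ is an infinite extension of $\bar{\mathbb{F}}_p(t)$, the extension $E/K$ is finite.

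First, setting $K_n := \bar{\mathbb{F}}_p(t^{1/n})$, I would pick a positive integer $n$ coprime to $p$ large enough that there is a subfield $F \subseteq E$ with $F/K_n$ finite, $E = K \cdot F$, and such that the restriction map from $\mathcal{Y}$ to the set $\mathcal{Y}_F$ of valuations of $F$ extending $\nu_\lambda|_{K_n}$ is injective. The existence of such an $F$ is clear by choosing a finite set of $K$-algebra generators of $E$ and collecting their minimal polynomials, whose coefficients all lie in $K_n$ for some common $n$ since $K = \bigcup_{m} K_m$. Injectivity can then be arranged by further enlarging $n$ so as to incorporate a distinguishing element $x_{ij} \in E$ for each pair of distinct valuations $\mu_i, \mu_j \in \mathcal{Y}$; this process terminates because $\mathcal{Y}$ is finite.

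Next I would exploit that $F$ is the function field of a smooth projective curve $X$ over $\bar{\mathbb{F}}_p$, and identify $\mathcal{Y}_F$ with a finite set of closed points $P_1, \dots, P_s$ of $X$ lying above the point $u = \lambda^{1/n}$ on $\mathbb{P}^1_{\bar{\mathbb{F}}_p}$, where $u := t^{1/n}$ and the chosen $n$-th root of $\lambda$ is the one prescribed by the compatible system defining $\nu_\lambda$ on $K$. Letting $\mathcal{O}$ be the ring of rational functions on $X$ regular outside the fibres above $u = 0$ and $u = \infty$, we have that $\mathcal{O}$ is a Dedekind domain---being the integral closure of the PID $\bar{\mathbb{F}}_p[u^{\pm 1}]$ in $F$---and that the $P_i$ correspond to distinct maximal ideals $\mathfrak{m}_1, \dots, \mathfrak{m}_s$ of $\mathcal{O}$. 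Labelling so that $\mu$ corresponds to $\mathfrak{m}_{i_0}$, the Chinese Remainder Theorem applied in $\mathcal{O}$ yields $\epsilon \in \mathcal{O}$ with $\epsilon \equiv 1 \pmod{\mathfrak{m}_{i_0}}$ and $\epsilon \equiv 0 \pmod{\mathfrak{m}_i}$ for $i \neq i_0$.

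Finally, I would verify the two required properties. Since $\epsilon$ is integral over $\bar{\mathbb{F}}_p[u^{\pm 1}]$, and since $\bar{\mathbb{F}}_p$ is algebraic (hence integral) over $\mathbb{F}_p$ with $u \in R$, transitivity of integral dependence shows that $\epsilon$ is integral over $R$; Remark \ref{val} then gives $\epsilon \in \mathcal{V}_{E,+}$. The congruence conditions modulo $\mathfrak{m}_{i_0}$ and the $\mathfrak{m}_i$ translate directly into the inequalities $\mu(\epsilon - 1) > 0$ and $\mu'(\epsilon) > 0$ for $\mu' \neq \mu$ in $\mathcal{Y}$, which is exactly the required statement. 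The main obstacle in this plan is the reduction step, specifically the injectivity of the restriction map $\mathcal{Y} \to \mathcal{Y}_F$ for an appropriate choice of $n$; once that is in place, the rest of the argument is routine Dedekind-domain theory together with Remark \ref{val}.
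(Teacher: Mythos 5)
Your proposal is correct, and at bottom it rests on the same engine as the paper's proof: apply the Chinese Remainder Theorem to the maximal ideals sitting over the place $\nu_\lambda$ and deduce integrality over $R$ from Remark \ref{val}. The difference is in where you run the CRT. The paper forms the integral closure $T$ of $R$ in $E$ and applies CRT to the primes of $T$ above $P=\{r\in R\colon\nu_\lambda(r)>0\}$, tacitly using that these primes are maximal, that the localizations $T_{Q_i}$ are DVRs, and that they correspond bijectively to the valuations in $\mathcal{Y}$; these facts hold because $T$ is a direct limit of Dedekind domains, but the paper does not spell this out. You instead descend explicitly to a finitely generated level: you choose $n$ coprime to $p$ and a finite extension $F$ of $K_n=\bar{\mathbb{F}}_p(t^{1/n})$ with $E=K\cdot F$ such that the restriction map $\mathcal{Y}\to\mathcal{Y}_F$ is injective, apply CRT in the genuine Dedekind domain $\mathcal{O}$ (the integral closure of $\bar{\mathbb{F}}_p[u^{\pm 1}]$ in $F$), and then use transitivity of integral dependence to land back in $\mathcal{V}_{E,+}$. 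What your route buys is that all the Noetherian/Dedekind hypotheses you invoke are textbook facts, at the modest cost of the extra injectivity step (which you handle correctly by adjoining finitely many distinguishing elements $x_{ij}$ and enlarging $n$). The two proofs are thus the same in spirit, with yours making explicit the reduction the paper's terser argument relies on implicitly.
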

 \begin{proof} Let $T$ denote the integral closure of $R$ in $E$.  Then $P:=\{r\in R\colon \nu_{\lambda}(r)>0\}$ is a maximal ideal of $R$. Let $Q_1,\ldots ,Q_s$ denote the prime ideals of $T$ that lie above $R$.  Then each local ring $T_{Q_i}$ is a discrete valuation ring and each $\mu\in Y$ is induced by one of these valuation rings. Since the $Q_i$ are maximal ideals, they are in particular pairwise comaximal, and so we see by the Chinese remainder theorem there exists some $\epsilon_i\in T$ such that $\epsilon_i-\delta_{i,j} \in Q_j$.  The fact that each $\epsilon_i\in \mathcal{V}_{E,+}$ follows from Remark \ref{val}. The result follows.
 \end{proof} 
 
\begin{lemma} \label{lem: VL} Adopt the notation from Notation \ref{not}, let $E$ be a Galois extension of $K$ of degree $p^m$ for some $m\ge 0$ that is unramified outside of $0$ and $\infty$, and let $a$ be a nonzero element of $E$.  Then there is some $b\in E$ such that $a-(b^p-b)$ is in $\mathcal{V}_{E}$.
\end{lemma}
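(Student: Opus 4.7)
The plan is to induct on a pole-badness measure for $a$ after descending to a finite function-field subextension. Since $\mathrm{Gal}(E/K)$ is a $p$-group, Remark~\ref{rem:Galois} exhibits $E/K$ as a tower of Artin--Schreier extensions, and the defining parameters all lie in $\bar{\mathbb{F}}_p(t^{1/n})$ for some $n$ coprime to $p$. Hence there is a finite Galois extension $E_n/K_n$ of degree $p^m$, with $K_n := \bar{\mathbb{F}}_p(t^{1/n})$, for which $E = E_n\cdot K$; enlarging $n$, we may take $a\in E_n$. Let $X_n$ be the smooth projective curve over $\bar{\mathbb{F}}_p$ with function field $E_n$. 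The unramifiedness of $E/K$ outside $\{0,\infty\}$ together with the tame unramifiedness of $K/K_n$ outside $\{0,\infty\}$ implies that $E/E_n$ is unramified at every valuation lying over $\bar{\mathbb{F}}_p^*$; consequently, membership of $a\in E_n$ in $\mathcal{V}_E$ is equivalent to the condition $\mu(a)\notin\{-p,-2p,-3p,\dots\}$ for every closed point $\mu$ of $X_n$ lying over $\bar{\mathbb{F}}_p^*\subset\mathbb{P}^1_{\bar{\mathbb{F}}_p}$.

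Define the badness measure
$$\Phi(a) \;:=\; \sum_{\mu}\bigl(-\mu(a)\bigr),$$
where $\mu$ ranges over the finitely many closed points of $X_n$ over $\bar{\mathbb{F}}_p^*$ with $\mu(a)\in\{-p,-2p,-3p,\dots\}$. This is a nonnegative integer (finite because $a\in E_n$ has only finitely many poles), and $\Phi(a)=0$ if and only if $a\in\mathcal{V}_E$. Since the Artin--Schreier polynomial $X^p-X$ is $\mathbb{F}_p$-linear, it suffices to show that whenever $\Phi(a)>0$ there exists $c\in E_n\subseteq E$ with $\Phi\bigl(a-(c^p-c)\bigr)<\Phi(a)$; summing the correctors $c$ produced over the (finitely many) iterations yields the desired $b$.

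To construct such a $c$, let $\mu_1,\ldots,\mu_s$ be the bad places of $a$, with $\mu_i(a)=-k_ip$ and $k_i\ge 1$, and let $\pi_i\in E_n$ be a uniformizer at $\mu_i$. Reading off the local expansion $a\equiv c_i\pi_i^{-k_ip}\pmod{\pi_i^{-k_ip+1}}$ with $c_i\in\bar{\mathbb{F}}_p^*$, set $d_i := c_i^{1/p}\in\bar{\mathbb{F}}_p$ (which exists because $\bar{\mathbb{F}}_p$ is perfect). By strong approximation in the function field $E_n$, with a closed point of $X_n$ over $\{0,\infty\}$ serving as the free place, there exists $c\in E_n$ with $\mu_i(c-d_i\pi_i^{-k_i})\ge -k_i+1$ at each $\mu_i$ and $\mu(c)\ge 0$ at every other closed point of $X_n$ over $\bar{\mathbb{F}}_p^*$. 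The Frobenius identity $(c-d_i\pi_i^{-k_i})^p=c^p-c_i\pi_i^{-k_ip}$ in characteristic $p$, combined with the local expansion of $a$, then gives $\mu_i(a-c^p)\ge -k_ip+1$; the ultrametric inequality and $\mu_i(c)=-k_i$ yield $\mu_i\bigl(a-(c^p-c)\bigr)\ge -k_ip+1$. At every other closed point $\mu$ of $X_n$ over $\bar{\mathbb{F}}_p^*$, the inequality $\mu(c^p-c)\ge 0$ prevents any new bad places from being created. Therefore any surviving bad pole order at $\mu_i$ is a multiple of $p$ in the range $[p,(k_i-1)p]$, so $\Phi\bigl(a-(c^p-c)\bigr)\le\Phi(a)-p<\Phi(a)$.

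The main obstacle is the global step: assembling a single $c\in E$ that simultaneously effects the prescribed local Artin--Schreier corrections at all bad valuations while remaining integral at every other finite valuation. This is precisely what strong approximation in the function field $E_n$ provides, and it is the reason we must first descend from the infinite algebraic extension $E$ to a finite subextension; the unramifiedness hypothesis is the geometric input that drives this descent by identifying the valuations of $E$ over $\bar{\mathbb{F}}_p^*$ with closed points of $X_n$. Once we are inside a classical function field, the local Artin--Schreier manipulation and the inductive bookkeeping are routine characteristic-$p$ computations.
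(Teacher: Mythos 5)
Your proof is correct and proves the lemma, but it takes a genuinely different route from the paper's. The paper works intrinsically inside the (infinite) extension $E/K$: it inducts on $M(a)=\sum m_i$, corrects one bad place $\mu_1$ at a time, and achieves the required localization by taking the element $\epsilon\in\mathcal{V}_{E,+}$ from Lemma~\ref{CRT} (Chinese Remainder Theorem over the integral closure of $R$), then uses $c\,\epsilon^{p^{s-1}}/(t-\lambda)^{m_1}$ with $s$ large so that the correction vanishes to high order at the other places over $\nu_\lambda$ and stays integral elsewhere; here the unramifiedness hypothesis enters by making $t-\lambda$ a uniformizer at $\mu_1$. You instead first descend to a finite Galois subextension $E_n/K_n$ (of the same $p$-power degree, still unramified outside $\{0,\infty\}$) so that $E_n$ is the function field of a smooth projective curve $X_n$ over $\bar{\mathbb{F}}_p$, and then produce a single corrector $c\in E_n$ fixing all bad places simultaneously via strong approximation (equivalently Riemann--Roch), with one free place over $\{0,\infty\}$. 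Your badness measure $\Phi(a)$ is $p\cdot M(a)$, so the two inductions coincide up to scaling. What your version buys is a more classical geometric picture (a bona fide curve, Riemann--Roch), at the cost of an extra reduction step and an appeal to strong approximation over an algebraically closed constant field; the paper's version avoids any descent and is more self-contained, replacing Riemann--Roch by the elementary $\epsilon^{p^s}$ device, but treats the bad places one at a time. Both arguments hinge on the same two facts: that the leading Laurent coefficient at a bad place has a $p$-th root in $\bar{\mathbb{F}}_p$, and that the Artin--Schreier twist $a\mapsto a-(c^p-c)$ strictly raises the valuation there while creating no new bad places elsewhere. A small point worth making explicit in your write-up is the argument that $E_n/K_n$ is unramified outside $\{0,\infty\}$ (it follows from unramifiedness of $E/K$ and of $K/K_n$ at $\bar{\mathbb{F}}_p^*$, using multiplicativity of ramification indices), since this is what guarantees that membership of $a\in E_n$ in $\mathcal{V}_E$ can be read off on $X_n$.
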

\begin{proof}
Let $\mathcal{X}$ denote the set of valuations on $E$ whose restriction to $K$ is of the form $\nu_{\lambda}$ for some $\lambda\in \bar{\mathbb{F}}_p^*$. Then there are finitely many $\mu\in \mathcal{X}$ such that $\mu(a)<0$. Since $E$ is an extension of $K$ that is unramified outside of $0$ and $\infty$, the value group of each $\mu\in \mathcal{X}$ is the same as the value group of the corresponding $\nu_{\lambda}$ and so $(t-\lambda)$ is a uniformizing parameter for the valuation ring of $\mu$. Let
$\mu_1,\ldots ,\mu_d$ be the finite set of valuations in $\mathcal{X}$ for which $\mu_i(a) \in \{-p, -2p, \dots\}$ for $i \in \{1, \dots, d\}$, and let $m_1,\ldots ,m_d$ be the positive integers such that $\mu_i(a)=-p m_i$.  

Let $M = M(a) := m_1 + \cdots + m_d$. We prove the claim by induction on $M$. When $M=0$ (i.e., $d=0$), there is nothing to show. We next assume that the claim holds whenever $M<N$ and we consider the case when $M=m_1+\cdots +m_d=N$.  Let $\lambda\in  \bar{\mathbb{F}}_p^*$ be such that 
$\mu_1|_K = \nu_{\lambda}$. Then since $\mu_{1}(a)=-m_1 p$, there is some $c\in \bar{\mathbb{F}}_p^*$ such that $\mu_1(a - c^p/(t-\lambda)^{m_1p}) > \mu_1(a)$.
By Lemma \ref{CRT}, there is some $\epsilon\in \mathcal{V}_{E,+}$ such that $\mu(\epsilon -\delta_{\mu,\mu_1}) > 0$ for all valuations $\mu\in\mathcal{X}$ with $\mu|_{K} = \mu_1|_K$.  It follows that for a sufficiently large $s>1$ we have
$\mu(a- c^p \epsilon^{p^s}/(t-\lambda)^{m_1p} + c \epsilon^{p^{s-1}}/(t-\lambda)^{m_1}) = \mu(a)$ for $\mu\in \mathcal{X}$ and $\mu\neq \mu_1$ and $\mu_1(a- c^p \epsilon^{p^s}/(t-\lambda)^{m_1p} + c \epsilon^{p^{s-1}}/(t-\lambda)^{m_1} ) < \mu_1(a)$.  Letting $a' = a - c^p \epsilon^{p^s}/(t-\lambda)^{m_1p} + c \epsilon^{p^{s-1}}/(t-\lambda)^{m_1}$ and noting that $\epsilon\in \mathcal{V}_{E,+}$, we see that $M(a') < M(a)$.  Then by the induction hypothesis there is some $b\in E$ such that $a'-b^p + b\in \mathcal{V}_{E}$ and so $a - (b')^p + b'\in \mathcal{V}_{E}$ with 
$$b' =  b + c \epsilon^{p^{s-1}}/(t-\lambda)^{m_1}.$$  The result follows.
\end{proof}

\begin{lemma} Adopt the notation of Notation \ref{not}. Then $\widetilde{C}\subseteq B$ and $C\subseteq B\cap \bar{\mathbb{F}}_p[[t]]$. \label{lem:BC}
\end{lemma}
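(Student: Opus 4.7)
The plan is to show that $B$ and $B \cap \bar{\mathbb{F}}_p[[t]]$ are non-trivial Artin-Schreier closed $\bar{\mathbb{F}}_p$-subalgebras of $\bar{\mathbb{F}}_p((t^{\mathbb{Q}}))$ and $\bar{\mathbb{F}}_p[[t]]$ respectively; the desired containments then follow from the minimality defining $\widetilde{C}$ and $C$.

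The key preliminary is a sublemma: if $H \in L_0$ is integral over $R$ and $m \geq 0$, then $H(t^{p^m}) \in L_0$ and is integral over $R$. Let $\phi$ be the automorphism of $K$ that acts as Frobenius on $\bar{\mathbb{F}}_p$ and fixes each $t^{1/n}$. Extend $\phi$ to an automorphism of $L$; since $\phi$ permutes the Galois $p$-power extensions of $K$ and sends each place $\nu_\lambda$ with $\lambda \in \bar{\mathbb{F}}_p^*$ to $\nu_{\phi(\lambda)}$ (still in $\bar{\mathbb{F}}_p^*$), it preserves $L_0$; it fixes $R$ pointwise, so preserves integrality over $R$. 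A direct computation gives $H(t^p) = \phi^{-1}(H)^p$, and iteration yields $H(t^{p^m}) = \phi^{-m}(H)^{p^m}$, which lies in $L_0$ and is integral over $R$. This immediately implies $B$ is closed under sums and products: given $F_1, F_2 \in B$ with associated $j_1, j_2$, choose $j \geq \max(j_1, j_2)$ and apply the sublemma, using that $L_0$ is a ring via the compositum-of-unramified-extensions property and that the set of elements integral over $R$ is a ring. Non-triviality is witnessed by $t \in B$.

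Next I verify the Artin-Schreier closure properties for $B$. For (P1), given $F \in B$ with $H := F(t^{p^{j_0}})$ and a solution $G$ of $X^p - X + F = 0$, set $G' := G(t^{p^{j_0}})$, so $(G')^p - G' = -H$. Take $E$ to be the Galois closure of $K[H]/K$, a $p$-power Galois extension of $K$ unramified outside $\{0, \infty\}$. By Remark \ref{val}, integrality of $H$ over $R$ gives $\nu(H) \geq 0$ for every valuation $\nu$ of $E$ extending some $\nu_\lambda$ with $\lambda \in \bar{\mathbb{F}}_p^*$, so the Artin-Schreier extension $E[G']/E$ is unramified at every such $\nu$. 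Taking the compositum over $\sigma \in \Gal(E/K)$ of the Artin-Schreier extensions of $E$ by $-H^\sigma$ (each integral, hence unramified at these valuations) produces a $p$-power Galois extension $F/K$ containing $G'$ and unramified outside $\{0, \infty\}$; thus $G' \in L_0$, and $G'$ is integral over $R$ by the monic relation $(G')^p - G' + H = 0$. For (P2), the substitution $t \mapsto \beta t$ with $\beta = \alpha^{1/p^{j_0}} \in \bar{\mathbb{F}}_p^*$ is a $K$-automorphism sending finite places to finite places, hence preserves $L_0$; it maps $R$ into $\bar{\mathbb{F}}_p[t^{\pm 1/n}]$, which is integral over $R$, so $R$-integrality is preserved. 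For (P3), choose $j$ large enough that $\gamma := p^{j-j_0}c \in \mathbb{Z}_{(p)}^{>0}$ and $p^j d \in \mathbb{Z}_{(p)}$; then $(t^d F(t^c))(t^{p^j}) = t^{p^j d} H(t^\gamma)$, with $t^{p^j d} \in R$, and the substitution $t \mapsto t^\gamma$ decomposes as $t \mapsto t^{p^s}$ (handled by the sublemma) followed by $t \mapsto t^{\gamma_0}$ for some $\gamma_0 \in \mathbb{Z}_{(p)}^\times \cap \mathbb{Q}_{>0}$, the latter being a $K$-automorphism preserving $L_0$ and $R$-integrality.

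The main obstacle is the Galois closure construction in (P1): $K[G']/K$ need not itself be Galois, so one must carefully build a larger $p$-power Galois extension of $K$ containing $G'$ --- namely the compositum of Artin-Schreier extensions of $E$ by the Galois conjugates of $-H$ --- and verify that its ramification outside $\{0, \infty\}$ is trivial using the integrality of $H$ and its conjugates together with the unramified status of $E/K$. The power series case $B \cap \bar{\mathbb{F}}_p[[t]]$ proceeds identically, with the added observation that when $F(0) = 0$, the Artin-Schreier solution $G$ is itself a formal power series. Finally, minimality of $\widetilde{C}$ in $\bar{\mathbb{F}}_p((t^{\mathbb{Q}}))$ and of $C$ in $\bar{\mathbb{F}}_p[[t]]$ yields $\widetilde{C} \subseteq B$ and $C \subseteq B \cap \bar{\mathbb{F}}_p[[t]]$.
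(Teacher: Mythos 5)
Your proof is correct and follows the same overall strategy as the paper: establish that $B$ is a non-trivial Artin--Schreier closed subalgebra of $\bar{\mathbb{F}}_p((t^{\mathbb{Q}}))$ (and $B\cap\bar{\mathbb{F}}_p[[t]]$ of $\bar{\mathbb{F}}_p[[t]]$), then invoke the minimality in the definitions of $\widetilde{C}$ and $C$. The verification of (P2) and (P3) by explicit automorphisms of $K$ mirrors the paper closely. However, you fill in two details that the paper's proof leaves implicit, and these are genuine improvements in rigor. First, membership in $L_0$ requires both that $K[G']/K$ be unramified outside $\{0,\infty\}$ \emph{and} that $G'\in L$, i.e.\ that $G'$ lie in some finite Galois $p$-power extension of $K$. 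The paper only checks the unramified condition; your compositum-of-conjugates construction (taking the splitting field of $\prod_{\sigma\in\Gal(E/K)}(X^p-X+\sigma(H))$ over $E$) supplies the Galois $p$-extension containing $G'$, which is the piece the paper glosses over. Second, the definition of ``Artin--Schreier closed'' in Definition~\ref{def:AS} presupposes that $B$ is a subalgebra, and the paper never actually verifies closure of $B$ under addition and multiplication. Your sublemma with the Frobenius twist $H(t^p)=\phi^{-1}(H)^p$ handles this cleanly: it lets you push two elements of $B$ with different indices $j_1, j_2$ to a common index, after which closure under ring operations follows from the compositum property of unramified extensions and the ring structure of integral elements. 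Both additions are needed for a fully careful proof, so you've done genuine work beyond the paper here rather than just reproducing it.
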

\begin{proof} By the definitions of $\widetilde{C}$ and $C$ it suffices to show that $B$ is Artin-Schreier closed in the ring of generalized Laurent series and that $B \cap  \bar{\mathbb{F}}_p[[t]]$ is Artin-Schreier closed in the ring of formal power series over $\bar{\mathbb{F}}_p$, since $B$ contains $R$. 

Since the substitutions $t\mapsto \alpha t$ with $\alpha\in \bar{\mathbb{F}}_p^*$ preserve $\mathbb{P}^1\setminus \{0,\infty\}$, $B$ and $B\cap \bar{\mathbb{F}}_p[[t]]$ both have property (P2) in Definition \ref{def:AS}.   Similarly, since for $c$ a nonzero rational number with $p$-adic valuation $1$, the map $t\mapsto t^c$ induces an automorphism of $R$; moreover, if $c>0$ then this extends to an automorphism of the ring of generalized Laurent series. Thus if $F(t)\in B$ then $F(t^c)\in B$ and if $F(t)\in B\cap  \bar{\mathbb{F}}_p[[t]]$ and $F(t^c)\in  \bar{\mathbb{F}}_p[[t]]$ then $F(t^c)\in B \cap \bar{\mathbb{F}}_p[[t]]$. By construction, $B$ is closed under the maps induced by $t\mapsto t^p$ and $t\mapsto t^{1/p}$ and so we then see property (P3) holds in both cases.  

Thus it remains to verify that property (P1) holds.  
Suppose that $F(t)\in B$. Then there is some $j\ge 0$ such that $G(t):=F(t^{p^j})$ is in $L_0$ and is integral over $R$. Let $H(t)$ be a generalized Laurent series that is a solution to $X^p-X=G(t)$.  Since $G(t)$ is integral over $R$, $H(t)$ is integral over $R$.  Let $E$ denote an extension of $K$ containing $G$ that is unramified outside of $0$ and $\infty$. Since $G(t)$ is integral over $R$, we have $G\in \mathcal{V}_{E,+}$ by Remark \ref{val}. Let $\nu$ be valuation of $E$ whose restriction to $K$ corresponds to $\nu_{\lambda}$ with $\lambda\in \bar{\mathbb{F}}_p$. Then since $E$ is an extension of $K$ that is unramified outside of $0$ and $\infty$, and $\nu(G)\ge 0$, we can complete with respect to the valuation $\nu$ and we see that $G$ has a formal power series expansion in the variable $u=t-\lambda$.  Write
$G(u) =\sum_{i\ge 0} a_i u^i$ and let $G_{+}(u)=\sum_{i\ge 1} a_i u^i$.  Then $H_1(u):=G_{+}(u)+G_{+}(u^p)+\cdots $ is a power series in $u$ and since $H$ is a solution to $X^p-X=G$, $H$ is of the form
$H_1+ \beta$ for some $\beta\in \bar{\mathbb{F}}_p$ with $\beta^p-\beta = a_0$.  In particular, $H$ lies in the completion of the valuation ring of $\nu$ and so the value groups of the extensions of $\nu$ to $E(H)$ are all equal to the value group of $\nu$ (more precisely $u$ is a uniformizing parameter for the valuation ring of $E(H)$ for each valuation above $\nu$) and so $E(H)$ is unramified at all extensions of $\nu$ to $E(H)$. Thus the field $E(H)$ is an extension of $K$ that is unramified outside of $0$ and $\infty$ and so $H\in B$.  The result follows.  Thus $B$ has properties (P1)--(P3). The result follows.
 \end{proof}
 
 \begin{proof}[Proof of Theorem \ref{thm:main} (b) and the equality $B=\widetilde{A}$ in Theorem \ref{thm:main2}] We adopt the notation of Notation \ref{not}.
Suppose that $G(t)\in B$.  We shall first show that $G(t)$ is sparse.  We may replace $G(t)$ by $G(t^{p^j})$ and assume that $G\in L_0$.  Let $E$ denote the Galois closure of $G(t)$ over $K$.  Then $[E:K]=p^m$ for some $m\ge 0$.  We prove this by induction on $m$.  If $m=0$, $G\in K$ and since $G$ is integral over $R$ and $R$ is integrally closed in $K$, $G\in R$, and so $G$ is easily seen to be sparse in this case, as elements of $R$ have finite support. Now we suppose that the result holds whenever $m<s$ with $s\ge 1$ and we consider the case when $m=s$.
 
Then by Remark \ref{rem:Galois}, there is a Galois extension $E_0$ of $K$ of degree $p^{s-1}$ that is unramified outside of $0$ and $\infty$ with $E=E_0[H]$ and $H^p-H=F\in E_0$.  By Lemma \ref{lem: VL}, there is some $b\in E_0$ such that $F-b^p+b\in \mathcal{V}_{E_0}$.  Thus after replacing $H$ by $H+b$, we may assume that $F\in \mathcal{V}_{E_0}$.  
Then notice in fact we must have $F\in \mathcal{V}_{E_0,+}$ since otherwise there would be some valuation $\nu$ of $E_0$ with $\nu(F)<0$ and $p\nmid \nu(F)$. Then since $H^p-H=F$, we have $\nu'(H)=\nu(F)/p\not\in\mathbb{Z}$ for every extension $\nu'$ of $\nu$ to $E$. In particular, this contradicts the fact that $E$ is unramified outside of $0$ and $\infty$. Thus $F$ is integral over $R$ by Remark \ref{val} and so $F\in B$ and thus $F$ is sparse by the induction hypothesis.  Hence $H$ is a sparse generalized power series by Proposition \ref{propsparse}.
Moreover, $H\in B$ as $E$ is unramified over $K$ outside of $0$ and $\infty$ and $H$ is integral over $R$ since $F$ is integral over $R$. Then since $G(t)\in E_0[H]$, we can write $G= e_0+e_1 H +\cdots + e_{p-1} H^{p-1}$ with $e_0,\ldots ,e_{p-1}\in E_0$.  We claim that $e_0,\ldots ,e_{p-1}\in B$. To see this, suppose that this is not the case. Then there is some largest $i\ge 0$ for which $e_i\not\in B$.  Then $G_0=e_0+e_1 H +\cdots + e_i H^i\in B$, since $G\in H$ and $e_j H^j\in B$ for $j>i$.  Since $E$ is a Galois extension of $E_0$ and $H^p-H\in E_0$, there is an automorphism of $E$ that fixes $E_0$ element-wise and sends $H$ to $H+1$.  Since $\sigma$ fixes $K$ element-wise, $\sigma$ preserves elements of $B$.  In particular, the operator $\Delta: E\to E$ given by $\Delta(a)=\sigma(a)-a$ maps elements of $B\cap E$ to elements of $B\cap E$.  Notice that
$$\Delta^i(G_0) = i! e_i\not\in B,$$ which is a contradiction, since $G_0\in B$ and $B$ is preserved under application of $\Delta$.  Thus $e_0,\ldots ,e_{p-1}\in B$.  By the induction hypothesis, $e_0,\ldots ,e_{p-1}$ are sparse generalized power series and since $H$ is also a sparse generalized power series, $G$ must be too, since sparse series form a ring. The result now follows by induction.  Hence $G(t)\in \widetilde{A}$.  By Lemma \ref{lem:BC} and the fact that $\widetilde{A}=\widetilde{C}$, established earlier, we see that $\widetilde{A}=\widetilde{C}\subseteq B$ and so $B=\widetilde{A}$.  It is straightforward to show that $\widetilde{A}\cap \bar{\mathbb{F}}_p[[t]] = A$ and so we also get $B\cap \bar{\mathbb{F}}_p[[t]] = A$.  This completes the proof.
\end{proof}
\section*{Acknowledgment} 
We are indebted to Jakub Byszewski for correcting an earlier statement of Theorem \ref{thm:main} and suggesting a suitable reformulation.
 

\begin{thebibliography}{BBCM17}
 
\bibitem[AS03]{AS} J.-P. Allouche and J. Shallit,
Automatic Sequences. Theory, applications, generalizations.
\emph{Cambridge University Press, Cambridge}, 2003. 

\bibitem[Alo99]{Alon} N. Alon,
Combinatorial Nullstellensatz.
Recent trends in combinatorics (M\'atrah\'aza, 1995). 
\emph{Combin. Probab. Comput.} {\bf 8} (1999), no. 1-2, 7--29. 


\bibitem[BGT16]{BGT} J.P. Bell, D. Ghioca and T. J. Tucker,
The Dynamical Mordell--Lang Conjecture. Mathematical Surveys and monographs, 210.
\emph{American Mathematical Society, Providence, RI}, 2016. 

\bibitem[BHS18]{BHS} J. P. Bell, K. Hare and J. Shallit,
When is an automatic set an additive basis? \emph{Proc. Amer. Math. Soc. Ser. B} {\bf 5} (2018), 50--63.

\bibitem[BM17]{BM} J. P. Bell and R. Moosa,
$F$-sets and Finite automata, 2017. To appear in \emph{J. Th\'eorie Nombres Bordeaux}.  Available online at arXiv:1712.03800. 

\bibitem[Chr79]{C1} G. Christol, Ensembles presque periodiques k-reconnaissables. \emph{Theoret. Comput. Sci.} {\bf 9} (1979), 141--145.

\bibitem[CKMR80]{C2} G. Christol, T. Kamae, M. Mend\`es France, G. Rauzy, Suites alg\'ebriques, automates et substitutions. \emph{Bull. Soc. Math. France} {\bf 108} (1980), 401--419.

\bibitem[Der07]{Derksen} H. Derksen,
A Skolem-Mahler-Lech theorem in positive characteristic and finite automata.
\emph{Invent. Math.} {\bf 168} (2007), no. 1. 175--224.

\bibitem[GKRS10]{Gawrychowski&Krieger&Rampersad&Shallit:2010} P. Gawrychowski, D. Krieger, N. Rampersad, and J. Shallit,
Finding the growth rate of a regular or context-free language in polynomial time.
\emph{Int. J. Found. Comput. Sci.} {\bf 21} (2010), 597--618.

\bibitem[Ghi08]{Ghioca} D. Ghioca, The isotrivial case in the {M}ordell-{L}ang theorem.
\emph{Trans. Amer. Math. Soc.} {\bf 360} (2008), no. 7, 3839--3856.
 
\bibitem[GS66]{Ginsburg&Spanier:1966} S. Ginsburg and E. Spanier,
Bounded regular sets.
\emph{Proc. Amer. Math. Soc.} {\bf 17} (1966), 1043--1049.

\bibitem[Hah07]{H} H. Hahn,
\"Uber die nichtarchimedischen Gr\"o{\ss}ensysteme (1907). Gesammelte Abhandlungen I, Springer-Verlag, 1995.

\bibitem[IR86]{Ibarra&Ravikumar:1986} O. H. Ibarra and B. Ravikumar,
On sparseness, ambiguity, and other decision problems for acceptors and transducers. 
{\em STACS 86 (Orsay, 1986)}, 171--179, {\em Lecture Notes in Comput. Sci.}, {\bf 210}, \emph{Springer, Berlin}, 1986.

\bibitem[Ked06]{K} K. S. Kedlaya,
Finite automata and algebraic extensions of function fields.
\emph{J. Th\'eorie Nombres Bordeaux} {\bf 18} (2006), no. 2, 379--420.

\bibitem[Ked17]{K2} K. S. Kedlaya, 
On the algebraicity of generalized power series.
\emph{Beitr. Algebra Geom.} {\bf 58} (2017), no. 3, 499--527. 
\bibitem[Lan02]{L} S. Lang,
Algebra. Revised third edition.
Graduate Texts in Mathematics, {\bf 211}. \emph{Springer-Verlag, New York}, 2002.

\bibitem[MS02]{MS} R. Moosa and T. Scanlon,
The Mordell-Lang conjecture in positive characteristic revisited.
\emph{Model Theory and its applications}, 273--296, Quad. Mat. {\bf 11}, \emph{Aracne, Rome}, 2002.

\bibitem[MS04]{fsets} R. Moosa and T. Scanlon,
{$F$}-structures and integral points on semiabelian varieties over finite fields.
\emph{Amer. J. Math.} {\bf 126} (3) (2004), 473--522.

\bibitem[Sal87a]{S1} O. Salon,
Suites automatiques \`a multi-indices et alg\'ebricit\'e.
\emph{C. R. Acad. Sci. Paris S\'er. I Math.} {\bf 305} (1987), no. 12, 501--504.

\bibitem[Sal87b]{S2} O. Salon, Suites automatiques \`a multi-indices (with an appendix by J. Shallit). \emph{Sem.
Th\'eorie Nombres Bordeaux} {\bf 4} (1986--1987), 1--27.

\bibitem[Ser79]{Serre} J.-P. Serre, \emph{Local fields.} Graduate Texts in Mathematics, 67. Springer-Verlag, New York-Berlin, 1979.

\bibitem[SW88]{SW} H. Sharif, C.F. Woodcock, Algebraic functions over a field of positive characteristic and
Hadamard products. \emph{J. London Math. Soc.} {\bf 37} (1988), 395--403.

\bibitem[SSYZ92]{Szilard&Yu&Zhang&Shallit:1992} J. Shallit, A. Szilard, S. Yu and K. Zhang,
Characterizing regular languages with polynomial densities.
{\em Mathematical foundations of computer science 1992 (Prague, 1992)}, 494--503, {\em Lect. Notes in Comput. Sci.}, {\bf 629}, \emph{Springer, Berlin}, 1992.

\bibitem[Tro81]{Trofimov:1981} V.I. Trofimov,
Growth functions of some classes of languages. (Russian).
\emph{Kibernetika (Kiev)} 1981, no. 6, i, 9--12, 149. 

\bibitem[ZS75]{ZS} O. Zariski and P. Samuel
Commutative Algebra. Vol. II. Reprint of 1960 edition. Graduate Texts in Mathematics, Vol. 29.
\emph{Springer-Verlag, New York-Heidelberg}, 1975.



  
  
\end{thebibliography}
\end{document}